\documentclass[11pt]{amsart}

\usepackage[pdftex]{graphicx} \usepackage[dvipsnames]{xcolor}

\usepackage[margin=2cm]{geometry}

\usepackage{amsfonts} \usepackage{amsmath} \usepackage{amssymb}
\usepackage{amsthm} \usepackage{mathtools}
\usepackage{tikz}
\usepackage{tikz-3dplot}

\usepackage{paralist} \usepackage[colorlinks,cite
color=blue,pagebackref=true,pdftex]{hyperref}

\usepackage[T1]{fontenc}

\usepackage{caption} \usepackage{subcaption}

\newcommand\Defn[1]{\textbf{\color{black}#1}}
\newcommand\Def[1]{\Defn{#1}}

\newcommand\ZO{\{0,1\}}
\renewcommand\c{\mathbf{c}}
\renewcommand\u{\mathbf{u}}
\renewcommand\v{\mathbf{v}}

\newcommand\p{\mathbf{p}}
\newcommand\q{\mathbf{q}}

\renewcommand\emptyset{\varnothing}
\newcommand\Z{\mathbb{Z}}

\newcommand\R{\mathbb{R}}

\newcommand\x{\mathbf{x}}
\newcommand\y{\mathbf{y}}

\newcommand\inner[1]{\langle {#1} \rangle}
\newcommand\defeq{\coloneqq}
\newcommand\eqdef{\eqqcolon}

\newcommand\Cube{\Box}

\newcommand\0{\mathbf{0}}
\newcommand\1{\mathbf{1}}
\newcommand\e{\mathbf{e}}

\DeclareMathOperator{\conv}{conv}

\DeclareMathOperator{\vol}{vol}

\newtheorem{thm}{Theorem}[section] \newtheorem{cor}[thm]{Corollary}
 \newtheorem{prop}[thm]{Proposition}

\theoremstyle{definition}

\title{$S$-hypersimplices, pulling triangulations, and monotone paths}

\author{Sebastian Manecke}
\author{Raman Sanyal} 
\author{Jeonghoon So}

\address{Institut f\"ur Mathematik, Goethe-Universit\"at
Frankfurt, Germany} 
\email{sanyal@math.uni-frankfurt.de}
\email{manecke@math.uni-frankfurt.de}
\email{jeonghoon.so@stud.uni-frankfurt.de}

\keywords{hypersimplex, pulling triangulation, permutahedra, monotone path
polytope}
\subjclass[2010]{%
52B20, %
52B12} %

\date{\today}

\parindent=0pt
\parskip=5pt

\begin{document}

\begin{abstract}
    An $S$-hypersimplex for $S \subseteq \{0,1, \dots,d\}$ is the convex hull of
    all $0/1$-vectors of length $d$ with coordinate sum in $S$. 
    These polytopes generalize the classical hypersimplices as well as
    cubes, crosspolytopes, and halfcubes. 
    In this paper we study faces and
    dissections of $S$-hypersimplices. Moreover, we show that monotone path
    polytopes of $S$-hypersimplices yield all types of multipermutahedra. 
    In analogy to cubes, we also show that the number of simplices in a
    pulling triangulation of a halfcube is independent of the pulling order.
\end{abstract}

\maketitle

\section{Introduction}\label{sec:intro}
The \Def{cube} $\Cube_d = [0,1]^d$ together with the \Def{simplex} $\Delta_d =
\conv(\0,\e_1,\dots,\e_d)$ and the \Def{cross-polytope} $\Diamond_d =
\conv(\pm \e_1,\dots,\pm \e_d)$ constitute the \emph{Big Three}, three
infinite families of convex polytopes whose geometric and combinatorial
features make them ubiquitous throughout mathematics.  A close cousin to the
cube is the \Def{(even) halfcube}
\[
    H_d \ \defeq \ \conv \left( \p \in \ZO^d \; : \; p_1 + \cdots + p_d  \text{
    even} \right ) \, .
\]
The halfcubes $H_1$ and $H_2$ are a point and a segment, respectively, but for
$d \ge 3$, $H_d \subset \R^d$ is a full-dimensional polytope. The
$5$-dimensional halfcube was already described by Thomas Gosset~\cite{Gosset} in
his classification of semi-regular polytopes. In contemporary mathematics,
halfcubes appear under the name of \emph{demi(hyper)cubes}~\cite{coxeter} or
\emph{parity polytopes}~\cite{Yannakakis}.  In particular the name `parity
polytope' suggests a connection to combinatorial optimization and polyhedral
combinatorics; see~\cite{CarrKonjevod, ErmelWalter} for more. However,
halfcubes also occur in algebraic/topological
combinatorics~\cite{Green,Green2}, convex algebraic geometry~\cite{SSS}, and
in many more areas.

In this paper, we investigate basic properties of the following class of
polytopes that contains cubes, simplices, cross-polytopes, and halfcubes.  For
a nonempty subset $S$ of $[0,d] \defeq \{0,1,\dots,d\}$, we define the
\Def{$S$-hypersimplex}
\[
    \Delta(d,S) \ \defeq \ \conv \Bigl( \v \in \ZO^d \; : \; v_1 + v_2 + \cdots
    + v_d \in S \Bigl) \, .
\]
In the context of combinatorial optimization these polytopes were studied by
Gr\"otschel~\cite{groetschel} associated to \emph{cardinality homogeneous set
systems}.
Our name and notation derive from the fact that if $S = \{ k \}$ is a singleton, then
$\Delta(d,S) \eqdef \Delta(d,k)$ is the well-known \Def{$(d,k)$-hypersimplex},
the convex hull of all vectors $\v \in \ZO^d$ with exactly $k$ entries equal
to $1$. This is a $(d-1)$-dimensional polytope for $0 < k < d$ that makes
prominent appearances in combinatorial optimization as well as in algebraic
geometry~\cite{MS}. We call $S$ \Def{proper}, if $\Delta(d,S)$ is a
$d$-dimensional polytope, which, for $d > 1$, is precisely the case if $|S|
\neq 1$ and $S \neq \{ 0, d \}$. For appropriate choices of $S \subseteq
[0,d]$, we get
\begin{compactitem}[--]
\item the cube $\Cube_d = \Delta(d, [0,d])$,
\item the even halfcube $H_d = \Delta(d, [0,d] \cap 2\Z)$,
\item the simplex $\Delta_d = \Delta(d, \ZO)$, and
\item the cross-polytope $\Delta(d,\{1,d-1\})$ (up to linear
    isomorphism). 
\end{compactitem}

\newcommand\Mono{\mathcal{M}}%
\newcommand\card{\mathsf{c}}%
In Section~\ref{sec:card}, we study the vertices, edges, and facets of
$S$-hypersimplices.

Our study is guided by a nice decomposition of
$S$-hypersimplices into \emph{Cayley polytopes} of hypersimplices.

In Section~\ref{sec:pull} we return to the halfcube. A combinatorial
$d$-cube has the interesting property that all pulling triangulations have
the same number of $d$-dimensional simplices. The Freudenthal or staircase
triangulation is a pulling triangulation and shows that the number of
simplices is exactly $d!$. We show that the number of simplices in any
pulling triangulation of $H_d$ is independent of the order in which the
vertices are pulled. Moreover, we relate the full-dimensional simplices in any
pulling triangulation of $H_d$ to \emph{partial} permutations and show that
their number is given by
\[
    t(d) \ = \ \sum_{l=3}^{d} \frac{d!}{l!} \left(2^{l-1} - l \right) \, .
\]

For a polytope $P \subset \R^d$ and a linear function $\ell : \R^d \to \R$,
Billera and Sturmfels~\cite{BS} associate the \Def{monotone path polytope}
$\Sigma_\ell(P)$.This is a $(\dim P - 1)$-dimensional polytope whose vertices
parametrize all \emph{coherent} $\ell$-monotone paths of $P$. As a particularly
nice example, they show in~\cite[Example~5.4]{BS} that the monotone path
polytope $\Sigma_\card(\Cube_d)$, where $\card$ is the linear function $\card(\x)
= x_1 + x_2 + \cdots + x_d$, is, up to homothety, the polytope
\[
    \Pi_{d-1} \ = \ \conv( (\sigma(1),\dots,\sigma(d)) \; : \; \sigma \text{
    permutation of } [d]) \, .
\]
For a point $\p \in \R^d$, the convex hull of all permutations of $\p$ is
called the
\Def{permutahedron} $\Pi(\p)$ and we refer to $\Pi_{d-1} = \Pi(1,2,\dots,d)$
as the \Def{standard} permutahedron. If $\p$ has $d$ distinct coordinates,
then $\Pi(\p)$ is combinatorially (even normally) equivalent to $\Pi_{d-1}$.
For the case that $\p$ has repeated entries, these polytopes were studied by
Billera-Sarangarajan~\cite{BilleraSarangarajan} under the name of
\emph{multipermutahedra}. In Section~\ref{sec:mono}, we study maximal
$\card$-monotone paths in the vertex-edge-graph of $\Delta(d,S)$. We show that
all $\card$-monotone paths of $\Delta(d,S)$ are coherent and that essentially
all multipermutahedra $\Pi(\p)$ for $\p \in [0,d-1]^d$ occur as monotone path
polytopes of $S$-hypersimplices. 

We close with some questions and ideas regarding $S$-hypersimplices in 
Section~\ref{sec:misc}.

\textbf{Acknowledgements.} This paper grew out of a project that was part of
the course \emph{Polytopes, Triangulations, and Applications} at Goethe
University Frankfurt in spring 2018. We thank Anastasia Karathanasis for her
support in the early stages of this project. We also thank Jes\'{u}s de Loera,
Georg Loho, and the anonymous referee for many helpful remarks.

\section{$S$-hypersimplices}\label{sec:card}

The vertices of the $d$-cube can be identified with sets $A \subseteq [d]$ and
we write $\e_A \in \ZO^d$ for the point with $(\e_A)_i = 1$ if and only if $i
\in A$. Let $S \subseteq [0,d]$. Since $\Delta(d,S)$ is a vertex-induced
subpolytope of the cube, it is immediate that the vertices of $\Delta(d,S)$
are in bijection to
\[
    \binom{[d]}{S} \ \defeq \ \{ A \subseteq [d] : |A| \in S \} \, .
\]
This gives the number of vertices as $|V(\Delta(d,S))| = \sum_{s \in
S}\binom{d}{s}$. 

For a polytope $P \subset \R^d$ and a vector $\c \in \R^d$, let 
\[
    P^\c \ \defeq \ \{ \x \in P : \inner{\c,\x} \ge \inner{\c,\y} \text{ for
    all } \y \in P \}
\]
be the \Def{face in direction $\c$}.  For example, unless $S = \{0\}$,
$\Delta(d,S)^{\e_i}$ is the convex hull of all $\e_A$ with $A \in
\binom{[d]}{S}$ with $i \in A$. Likewise, unless $S = \{d\}$,
$\Delta(d,S)^{-\e_i} = \conv(\e_A : A \in A \in \binom{[d]}{S},i \not\in A)$.
Under the identification $\{ \x : x_i = 1 \} \cong \R^{d-1}$, this gives for
$|S| > 1$
\begin{equation}\label{eqn:Delta_facets}
    \begin{aligned}
    \Delta(d, S)^{\e_i} \ &\cong \ \Delta(d - 1, S^+)
    \qquad \text{where } S^+ \defeq \{s-1 : s \in S, s > 0\} \, , \\
    \Delta(d,S)^{-\e_i} \ &\cong \
    \Delta(d-1,S^-) \qquad\text{where } S^- \defeq \{ s : s \in S, s < d-1 \} \, .
    \end{aligned}
\end{equation}

These faces will be helpful in determining the edges of $\Delta(d,S)$.  For
two sets $A,B \subseteq [d]$, we denote the \Def{symmetric difference} of $A$
and $B$ by $A \triangle B \defeq (A \cup B) \setminus (A \cap B)$. For two
points $\p,\q \in \R^d$, we write $[\p,\q]$ for the segment joining $\p$ to
$\q$.

\begin{thm}\label{thm:all_edges}
    Let $S = \{ 0 \le s_1 < \dots < s_k \le d\}$ and $A,B \in \binom{[d]}{S}$
    with $|A|=s_i \le s_j = |B|$. Then $[\e_A, \e_B]$ is an edge of $\Delta(d,
    S)$ if and only if 
    \begin{compactenum}[\rm (i)]
    \item $A \subset B$ and $j  = i+1$, or
    \item $i = j$, $|A \triangle B| = 2$, and $\{s_i - 1, s_i + 1\}
        \not\subset S$.
    \end{compactenum}
\end{thm}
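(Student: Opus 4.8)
The plan is to use the standard criterion that, for distinct vertices $\e_A,\e_B$, the segment $[\e_A,\e_B]$ is an edge of $\Delta(d,S)$ if and only if its midpoint $m \defeq \tfrac12(\e_A+\e_B)$ (indeed, any point of $\relint[\e_A,\e_B]$) admits no convex representation by vertices other than $\e_A,\e_B$: the minimal face containing such an interior point is the segment itself precisely in the edge case. Thus to prove ``$\Leftarrow$'' I would produce, for each of (i) and (ii), a linear functional whose maximum over $\Delta(d,S)$ is attained exactly on $\{\e_A,\e_B\}$, and to prove ``$\Rightarrow$'' I would, whenever both (i) and (ii) fail, exhibit a point of $\relint[\e_A,\e_B]$ lying in the convex hull of the remaining vertices.

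For ``$\Leftarrow$'' set $C_0 \defeq A\cap B$. In case (i), where $A\subset B$ and $s_j=s_{i+1}$, I take $\c$ with $c_l=1$ for $l\in A$, $c_l=-1$ for $l\notin B$, and $c_l=0$ for $l\in B\setminus A$; then $\inner{\c,\e_C}=|C\cap A|-|C\setminus B|\le s_i$ with equality exactly when $A\subseteq C\subseteq B$, and since no element of $S$ lies strictly between $s_i$ and $s_{i+1}$ the only such vertices are $\e_A$ and $\e_B$. In case (ii), write $A=C_0\cup\{a\}$ and $B=C_0\cup\{b\}$; starting from the functional that is $1$ on $C_0$, $-1$ off $A\cup B$, and $0$ on $\{a,b\}$ (whose maximizers among all $\e_C$ are $C_0,A,B,C_0\cup\{a,b\}$), I would add a small perturbation $\pm\eps$ on the coordinates $a,b$. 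Since $\{s_i-1,s_i+1\}\not\subset S$, at least one of $C_0$ (size $s_i-1$) and $C_0\cup\{a,b\}$ (size $s_i+1$) fails to be a vertex, and choosing the sign of $\eps$ to push the remaining one strictly below $\e_A,\e_B$ leaves $\{\e_A,\e_B\}$ as the unique maximizers once $|\eps|$ is small.

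For ``$\Rightarrow$'' I assume (i) and (ii) both fail and split on whether $s_i=s_j$. The easy situations are handled by a two-vertex swap: any $X\subseteq A\triangle B$ produces vertices $A'\defeq C_0\cup X$ and $B'\defeq C_0\cup((A\triangle B)\setminus X)$ with $\e_{A'}+\e_{B'}=\e_A+\e_B$, so if $A',B'$ form a pair distinct from $\{A,B\}$ with both sizes in $S$, then $m$ lies on $[\e_{A'},\e_{B'}]$ and the segment is not an edge. When $s_i=s_j$ and $|A\triangle B|=2t\ge4$, any $t$-subset $X$ of $A\triangle B$ other than $A\setminus B$ or $B\setminus A$ works (there are $\binom{2t}{t}>2$ choices); when $s_i=s_j$, $|A\triangle B|=2$ but $\{s_i-1,s_i+1\}\subset S$, the swap $X=\emptyset$ gives the pair $C_0,\,C_0\cup\{a,b\}$ of sizes $s_i-1$ and $s_i+1$, both in $S$; and when $s_i<s_j$ with $A\not\subset B$, i.e.\ $\alpha\defeq|A\setminus B|\ge1$, then also $\beta\defeq|B\setminus A|=s_j-s_i+\alpha\ge1$, so some $\alpha$-subset $X\ne A\setminus B$ yields a new pair of the same sizes $s_i,s_j$.

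The main obstacle is the remaining case $s_i<s_j$, $A\subset B$, and $s_j\ge s_{i+2}$, where a two-vertex swap need not exist, since the complementary size $s_i+s_j-s_r$ may lie outside $S$. Here I would instead average over a whole orbit. As (i) fails only through non-consecutiveness, there is an $s_r\in S$ with $s_i<s_r<s_j$; I consider all vertices $\e_C$ with $A\subseteq C\subseteq B$ and $|C|=s_r$, of which there are $\binom{\beta}{s_r-s_i}\ge2$ with $\beta=s_j-s_i$. Their centroid equals $1$ on the coordinates of $A$, $0$ off $B$, and the constant $\tfrac{s_r-s_i}{\beta}$ on $B\setminus A$, which is exactly the point $(1-\theta)\e_A+\theta\e_B$ with $\theta=\tfrac{s_r-s_i}{\beta}\in(0,1)$. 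Thus a relative interior point of $[\e_A,\e_B]$ is written as a convex combination of vertices of size $s_r\notin\{s_i,s_j\}$, and the segment is not an edge. I expect this averaging step to be the crux, as it is precisely where the naive midpoint-swap breaks down and one is forced to use more than two vertices.
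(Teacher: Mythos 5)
Your proof is correct, but it takes a genuinely different route from the paper's. The paper's key device is an inductive reduction: since $[\e_A,\e_B]$ is an edge of $\Delta(d,S)$ iff it is an edge of the facet $\Delta(d,S)^{\e_i}\cong\Delta(d-1,S^+)$ for $i\in A\cap B$ (resp.\ of $\Delta(d,S)^{-\e_i}\cong\Delta(d-1,S^-)$ for $i\notin A\cup B$), it strips off all coordinates outside $A\triangle B$ and reduces to the case $A\cap B=\emptyset$, $A\cup B=[d]$. There the whole theorem collapses to two short observations: the segment $[\0,\1]$ pierces the relative interior of every intermediate slice $\Delta(d,k)$ (forcing $j=i+1$ in case (i)), and a single transposition swap handles the rest. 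You instead work directly in $\Delta(d,S)$ with no reduction: for sufficiency you exhibit explicit supporting functionals (the unperturbed one for (i), a perturbed one for (ii)), and for necessity you combine the same two-vertex swap with a centroid-averaging argument over all $C$ with $A\subseteq C\subseteq B$, $|C|=s_r$ --- which is really the unreduced avatar of the paper's "pierces every slice" observation. The trade-off: the paper's proof is much shorter but terse (its final sentence, asserting the edge condition is ``$A\cap B, A\cup B\notin\binom{[d]}{S}$'', has to be read as ``not both lie in $\binom{[d]}{S}$'' to match statement (ii), and the sufficiency of that condition is left implicit), whereas your argument is longer but fully self-contained and produces explicit certificates for each edge direction, at the cost of a five-way case analysis. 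All of your cases check out, including the perturbation signs in (ii) and the counting $\binom{2t}{t}>2$, $\binom{\alpha+\beta}{\alpha}\ge 2$ guaranteeing a genuinely new swap pair.
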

\begin{proof}
    Let $A, B \in \binom{[d]}{S}$. If $i \in A \cap B$, then
    $[\e_A,\e_B]$ is an edge of $\Delta(d,S)$ if and only if
    $[\e_A,\e_B]$ is an edge of
    $\Delta(d,S)^{\e_i}$. By~\eqref{eqn:Delta_facets},
    $\Delta(d,S)^{\e_i} \cong \Delta(d-1,S^+)$ and $[\e_A,\e_B]
    \cong[\e_{A \setminus i},\e_{B \setminus i}]$. Hence we can assume
    $A \cap B = \emptyset$. For $i \in [d] \setminus (A \cup B)$, we
    consider $\Delta(d,S)^{-\e_i}$ and by the same argument we may also
    assume that $A \cup B = [d]$.
    
    If $A = \emptyset$, then $B = [d]$ and $[\e_A,\e_B]$ meets every
    $\Delta(d,k)$ in the relative interior for $0 < k < d$. Hence
    $[\e_A, \e_B]$ is an edge if and only if $S = \{0,d\}$, which
    gives us (i).
    
    If $0 < s_i = |A|$, then let $i \in A$ and $j \in B$. Then $[\e_A,
    \e_B]$ and $[\e_{A'}, \e_{B'}]$ have the same midpoint for $A' = (A
    \setminus i) \cup j$ and $B' = (B \setminus j) \cup i$. Thus
    $[\e_A,\e_B]$ is an edge of $\Delta(d,S)$ if and only if $(A',B')
    = (B,A)$. This is the case precisely when $|A \triangle B| = 2$
    and $A \cap B, A \cup B \not \in \binom{[d]}{S}$.
\end{proof}

Theorem~\ref{thm:all_edges} makes the number of edges readily available.

\begin{cor}\label{cor:num_edges}
    The number of edges of $\Delta(d, S)$ is
    \[
        \sum_{i=1}^{k} \binom{d-s_i}{s_{i+1}-s_i}\binom{d}{s_i} + \sum_j
        \frac{s_j(d-s_j)}{2} \binom{d}{s_j} \, ,
    \]
    where we set $s_{k+1} = 0$ and the second sum is over all $1 \le j \le k$,
    such that $\{s_j - 1, s_j + 1\} \not\subset S$.
\end{cor}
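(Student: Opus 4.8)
The plan is to count edges by directly applying Theorem~\ref{thm:all_edges}, treating the two types of edges separately and summing their counts. Since the two conditions (i) and (ii) are mutually exclusive (the first requires $i < j$ while the second requires $i = j$), the total number of edges is simply the sum of the number of edges of each type, and these will produce the two sums in the claimed formula.

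First I would count the type-(i) edges. These are pairs $A \subset B$ with $|A| = s_i$, $|B| = s_{i+1}$, and consecutive indices. To build such a pair, I choose the smaller set $A$ freely among the $\binom{d}{s_i}$ subsets of size $s_i$, and then choose $B \supset A$ by adjoining $s_{i+1} - s_i$ further elements from the remaining $d - s_i$ coordinates, giving $\binom{d - s_i}{s_{i+1} - s_i}$ choices. Multiplying and summing over $i$ from $1$ to $k$ yields $\sum_{i=1}^k \binom{d - s_i}{s_{i+1} - s_i}\binom{d}{s_i}$; the convention $s_{k+1} = 0$ makes $\binom{d - s_k}{-s_k} = 0$ (for the relevant range), so the $i = k$ term vanishes and no spurious edge is counted for the largest layer, which matches the requirement that type-(i) edges only connect consecutive \emph{distinct} layers.

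Next I would count the type-(ii) edges, which live entirely within a single layer $|A| = |B| = s_j$. Here Theorem~\ref{thm:all_edges} requires $|A \triangle B| = 2$ and the side condition $\{s_j - 1, s_j + 1\} \not\subset S$; the latter depends only on $j$, not on the particular sets, so I restrict the sum to those indices $j$ satisfying it. For a fixed such $j$, I count unordered pairs $\{A, B\}$ of $s_j$-subsets differing in exactly two coordinates: equivalently, $A$ and $B$ share $s_j - 1$ common elements, with one element unique to $A$ and one unique to $B$. Choosing $A$ in $\binom{d}{s_j}$ ways, there are then $s_j$ choices for the element of $A$ to remove and $d - s_j$ choices for the element outside $A$ to add, giving $s_j(d - s_j)$ ordered neighbours; dividing by $2$ to account for the unordered pair $\{A,B\}$ gives $\tfrac{1}{2} s_j (d - s_j) \binom{d}{s_j}$ edges for each admissible $j$, and summing reproduces the second sum.

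The only genuinely delicate point, and the step I would check most carefully, is the boundary behaviour of the conventions: verifying that the degenerate or extreme cases ($s_1 = 0$, $s_k = d$, and especially the wraparound convention $s_{k+1} = 0$) do not cause type-(i) edges to be miscounted, and confirming that the proviso $\{s_j-1,s_j+1\}\not\subset S$ in the edge theorem translates exactly into restricting the index set of the second sum rather than altering the per-$j$ count. Once these edge cases are reconciled with the binomial conventions, the corollary follows immediately.
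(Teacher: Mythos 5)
Your proposal is correct and matches the paper's (implicit) argument: the corollary is stated as an immediate consequence of Theorem~\ref{thm:all_edges}, and the intended derivation is exactly the direct count you give — $\binom{d}{s_i}\binom{d-s_i}{s_{i+1}-s_i}$ nested pairs between consecutive layers plus $\tfrac{1}{2}s_j(d-s_j)\binom{d}{s_j}$ Johnson-graph edges within each admissible layer. Your attention to the convention $s_{k+1}=0$ killing the $i=k$ term is the right boundary check.
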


Let us illustrate Theorem~\ref{thm:all_edges} for the classical
examples of $S$-hypersimplices. For $\Cube_d = \Delta(d,[0,d])$ it
states, that the edges are of the form $[\e_A,\e_B]$ for any $A
\subset B \subseteq [d]$ such that $|A| + 1 = |B|$.  For the halfcube
$H_d = \Delta(d,[0,d]\cap 2\Z)$ we infer that there are $d(d-1)
2^{d-3}$ many edges for $d \ge 3$. As for the cross-polytope $
\Delta(d,\{1,d-1\})$, every two vertices are connected by an edge,
except for $\e_{\{i\}}$ and $\e_{[d] \setminus \{i\}}$ for all $i \in
[d]$.

Theorem~\ref{thm:all_edges} states that there are no \emph{long} edges of
$\Delta(d,S)$.  We can make use of this fact to get a canonical decomposition
of $\Delta(d,S)$.  For $\lambda \in \R$, define the hyperplane 
\[
    H(\lambda) \ \defeq \ \{ \x \in \R^d : x_1 + \cdots + x_d = \lambda \} \,
    .
\]
We note the following consequence of Theorem~\ref{thm:all_edges}.

\begin{cor}
    Let $S \subseteq [0,d]$ and $s \in S$. Then $\Delta(d,S) \cap H(s)
    = \Delta(d,s)$.
\end{cor}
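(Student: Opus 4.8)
The plan is to prove the two inclusions separately, the first being routine and the second being where Theorem~\ref{thm:all_edges} does the work. For $\Delta(d,s) \subseteq \Delta(d,S) \cap H(s)$, note that every $\e_A$ with $|A| = s$ is a vertex of $\Delta(d,S)$ because $s \in S$, and it satisfies $\card(\e_A) = s$, hence lies on $H(s)$. Since $H(s)$ is an affine hyperplane and $\Delta(d,S)$ is convex, their intersection is convex, so the convex hull $\Delta(d,s) = \conv(\e_A : |A| = s)$ is contained in $\Delta(d,S) \cap H(s)$.

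For the reverse inclusion I would set $Q \defeq \Delta(d,S) \cap H(s)$ and control the vertices of this hyperplane section. The principle I would invoke is standard: every vertex of a section $P \cap H$ is either a vertex of $P$ that happens to lie on $H$, or the unique point in which $H$ meets the relative interior of an edge of $P$ whose two endpoints lie strictly on opposite sides of $H$. To justify this cleanly I would argue through minimal faces: if $v$ is a vertex of $Q$ and $G$ is the minimal face of $\Delta(d,S)$ with $v \in \relint G$, then $\dim G \ge 2$ or $G \subseteq H(s)$ would force $v$ to lie in the relative interior of a positive-dimensional subset of $Q$ (namely $G \cap H(s)$ or $G$ itself), contradicting that $v$ is a vertex of $Q$. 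Hence $G$ is either a vertex lying on $H(s)$ or an edge crossing $H(s)$ transversally.

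The key step is to rule out the transversal edges using Theorem~\ref{thm:all_edges}. Every edge $[\e_A,\e_B]$ of $\Delta(d,S)$ with $|A| = s_i \le |B| = s_j$ falls into one of its two cases: either $i = j$, in which case $\card$ is constant on the edge and the edge lies in the single hyperplane $H(s_i)$; or $j = i+1$, in which case $\card$ restricted to the relative interior of the edge takes exactly the values in the open interval $(s_i, s_{i+1})$. Since $s \in S$ while $s_i, s_{i+1}$ are \emph{consecutive} elements of $S$, this open interval contains no element of $S$ and in particular does not contain $s$. Therefore no edge of $\Delta(d,S)$ meets $H(s)$ transversally, so by the previous paragraph every vertex of $Q$ is a vertex of $\Delta(d,S)$ lying on $H(s)$, that is, some $\e_A$ with $|A| = s$. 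Consequently $Q = \conv(\text{vert}(Q)) \subseteq \conv(\e_A : |A| = s) = \Delta(d,s)$, which together with the first inclusion yields the claimed equality.

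The only genuinely delicate point is the slice-vertex principle in the second paragraph; once that is in place, the argument is pure bookkeeping handed to us by the classification of edges. An alternative that sidesteps the principle is to argue directly on a point $\x \in \Delta(d,S) \cap H(s)$: take the minimal face containing $\x$ in its relative interior and show, again via Theorem~\ref{thm:all_edges} and the fact that $s$ cannot be skipped by jumps between consecutive levels of $S$, that this face lies in $H(s)$, so all its vertices have $|A| = s$ and $\x \in \Delta(d,s)$.
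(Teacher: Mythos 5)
Your proof is correct and follows essentially the same route as the paper: identify each vertex of the slice $\Delta(d,S)\cap H(s)$ with a minimal face of dimension at most one, and use Theorem~\ref{thm:all_edges} to rule out an edge whose endpoints have cardinalities straddling $s$, since edges only join consecutive levels of $S$. The extra detail you supply (the explicit easy inclusion and the justification of the slice-vertex principle) is sound but not a different argument.
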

\begin{proof}
    Every vertex $\v$ of $\Delta(d,S) \cap H(s)$ is of the form $F \cap H(s)$
    for a unique inclusion-minimal face $F \subseteq \Delta(d,S)$ of dimension
    $\le 1$. If $F$ is an edge, then its endpoints $\e_A,\e_B$ satisfy $|A| <
    s < |B|$ which contradicts Theorem~\ref{thm:all_edges}. Hence $F = \e_C$
    for some $C \subseteq [d]$ with $|C| = s$.
\end{proof}

If $S = \{s_1 < \cdots < s_k\}$ with $k \ge 2$, then we can decompose
\begin{equation}\label{eqn:decomp}
    \Delta(d,S) \ = \ 
    \Delta(d,s_1,s_{2}) \ \cup \
    \Delta(d,s_2,s_{3}) \ \cup \
    \cdots \ \cup \
    \Delta(d,s_{k-1},s_{k}) \, ,
\end{equation}
where we set $\Delta(d,k,l) \defeq \Delta(d,\{k,l\}) = \conv( \Delta(d,k) \cup
\Delta(d,l))$ for $0 \le k < l \le d$.  The polytope $\Delta(d,k,l)$ is the
\Def{Cayley polytope} of $\Delta(d,k)$ and $\Delta(d,l)$. Moreover, for $i <
j$, we see that $\Delta(d,s_i,s_{i+1}) \cap \Delta(d,s_j,s_{j+1}) =
\Delta(d,s_j)$ if $j = i +1$ and $=\emptyset$ otherwise.

Before we determine the facets of $\Delta(d,S)$, we recall some properties of
permutahedra from~\cite{BilleraSarangarajan} that we will also need in
Section~\ref{sec:mono}. A point $\p \in \R^d$ is \Def{decreasing} if $p_1 \ge
p_2 \ge \cdots \ge p_d$. The \Def{permutahedron} associated to $\p$ is the
polytope
\[
    \Pi(\p) \ \defeq \ \conv \bigl(  \sigma\p \defeq (
    p_{\sigma(1)},
    p_{\sigma(2)},
    \dots,
    p_{\sigma(d)}) \; : \; \sigma \text{ permutation of } [d] \bigr) \, .
\]
Unless $p_i = p_j$ for all $i \neq j$, $\Pi(\p)$ is a polytope of dimension
$d-1$ with affine hull given by $H(p_1+ \cdots + p_d)$.

Notice that $\Pi(\p)^{\sigma\u} \ = \ \sigma^{-1}\Pi(\p)^\u$.
Thus, if we want to determine the face $\Pi(\p)^\u$ up to permutation of
coordinates, we can assume that $\u$ is decreasing.  The \Def{Minkowski sum}
of two polytopes $P,Q \subset \R^d$ is the polytope $P + Q = \{\p + \q : p \in
P, q \in Q\}$. 

\begin{prop}\label{prop:perm_Mink}
    Let $\p,\q \in \R^d$ be decreasing. Then 
    \[
        \Pi(\p) + \Pi(\q) \ = \ \Pi(\p+\q) \, .
    \]
\end{prop}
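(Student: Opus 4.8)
The plan is to prove the identity via support functions. For a polytope $P$ and a direction $\c$, let $h_P(\c) \defeq \max_{\x \in P} \inner{\c,\x}$ be the value taken by $\inner{\c,\cdot}$ on the face $P^\c$. A polytope is uniquely determined by its support function $h_P$, and support functions are additive under Minkowski sums, so $h_{\Pi(\p)+\Pi(\q)} = h_{\Pi(\p)} + h_{\Pi(\q)}$. It therefore suffices to verify
\[
    h_{\Pi(\p)}(\c) + h_{\Pi(\q)}(\c) \ = \ h_{\Pi(\p+\q)}(\c)
\]
for every $\c \in \R^d$, after which matching support functions gives the claimed equality of polytopes.

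The key computation is the support function of a permutahedron. Fixing $\c$, we must maximize $\inner{\c,\sigma\p} = \sum_{i=1}^d c_i\, p_{\sigma(i)}$ over all permutations $\sigma$. By the rearrangement inequality, since $\p$ is decreasing, this sum is maximized exactly when larger values of $p_{\sigma(i)}$ are paired with larger values of $c_i$: if $\tau$ is any permutation with $c_{\tau(1)} \ge c_{\tau(2)} \ge \cdots \ge c_{\tau(d)}$, then the maximum equals $\sum_{i=1}^d c_{\tau(i)}\, p_i$. The decisive point is that the optimal pairing depends only on the relative order of the coordinates of $\c$ and not on the actual values of $\p$. Since $\p$, $\q$, and hence $\p+\q$ are all decreasing, the same permutation $\tau$ simultaneously realizes $h_{\Pi(\p)}(\c)$, $h_{\Pi(\q)}(\c)$, and $h_{\Pi(\p+\q)}(\c)$.

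Granting this, the identity is immediate:
\[
    h_{\Pi(\p)}(\c) + h_{\Pi(\q)}(\c) \ = \ \sum_{i=1}^d c_{\tau(i)} p_i + \sum_{i=1}^d c_{\tau(i)} q_i \ = \ \sum_{i=1}^d c_{\tau(i)}(p_i + q_i) \ = \ h_{\Pi(\p+\q)}(\c),
\]
where the last equality uses that $\p+\q$ is decreasing, so that $\tau$ is again a maximizer. Matching support functions then yields $\Pi(\p)+\Pi(\q) = \Pi(\p+\q)$.

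The main obstacle is the rearrangement step, specifically handling ties among the coordinates of $\c$: when $\c$ has repeated entries there are several valid choices of $\tau$, and one must check that each gives the same value $\sum_i c_{\tau(i)} p_i$ (it does, since permuting indices with equal $c$-values leaves the sum unchanged), so that $h_{\Pi(\p)}(\c)$ is well defined and the shared-maximizer argument goes through uniformly for $\p$, $\q$, and $\p+\q$. As a sanity check, the inclusion $\Pi(\p+\q) \subseteq \Pi(\p)+\Pi(\q)$ is elementary, since each vertex $\sigma(\p+\q) = \sigma\p + \sigma\q$ already lies in $\Pi(\p)+\Pi(\q)$ and taking convex hulls preserves this; the real content of the proposition is the reverse inclusion, namely that $\sigma\p + \tau\q \in \Pi(\p+\q)$ even when $\sigma \neq \tau$, which is exactly what the support-function argument delivers.
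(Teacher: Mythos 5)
Your proof is correct, and it reaches the conclusion by a route that is formally different from, though closely parallel to, the paper's. The paper argues at the level of vertices: the inclusion $\Pi(\p+\q) \subseteq \Pi(\p)+\Pi(\q)$ is the same easy observation you make at the end, and for the converse it takes a vertex $P^\c$ of the Minkowski sum, uses the invariance of $\Pi(\p)+\Pi(\q)$ under coordinate permutations to assume $\c$ is decreasing, and then reads off $P^\c = \Pi(\p)^\c + \Pi(\q)^\c = \{\p\} + \{\q\} = \{\p+\q\}$; the fact that a decreasing $\c$ selects the vertex $\p$ of $\Pi(\p)$ is exactly your rearrangement step in disguise. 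You instead package the argument through support functions: you compute $h_{\Pi(\p)}(\c) = \sum_i c_{\tau(i)}p_i$ explicitly and observe that the optimal sorting permutation $\tau$ depends only on $\c$ and not on which decreasing vector is being permuted, so the same $\tau$ certifies all three values at once. What your version buys is that it needs no symmetry reduction and no discussion of faces of a Minkowski sum; what it costs is invoking the two standard facts that support functions are additive under Minkowski sum and determine the polytope. Your explicit treatment of ties in $\c$ is also a point the paper glosses over (for a non-generic decreasing $\c$ the face $\Pi(\p)^\c$ need not be a single vertex, though it always contains $\p$). Both proofs ultimately rest on the same rearrangement observation.
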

\begin{proof}
    Set $P \defeq \Pi(\p) + \Pi(\q)$. Clearly $\sigma(\p + \q) =
    \sigma\p + \sigma\q$ for all permutations $\sigma$ and therefore
    every vertex of $\Pi(\p + \q)$ is a vertex of $P$. For the converse,
    let $\c$ be such that $P^\c = \{\v\}$ is a vertex. Since $P$ is
    invariant under coordinate permutations, we can assume that $\c$ is
    decreasing. Furthermore $(\Pi(\p) + \Pi(\q))^\c = \Pi(\p)^\c +
    \Pi(\q)^\c$ and it follows that $\v = \p + \q$. Hence, every vertex
    of $P$ is of the form $\sigma (\p + \q)$ for some permutation
    $\sigma$, which completes the proof.
\end{proof}

For $\nu_1 > \nu_2 > \cdots > \nu_r$ and $k_1, k_2, \dots, k_r \in \Z_{> 0}$
such that $k_1 + \cdots + k_r = d$, we set
\[
    (\nu_1^{k_1},\nu_2^{k_2},\dots,\nu_r^{k_r}) \ \defeq \
    (\underbrace{\nu_1,\dots,\nu_1}_{k_1},\underbrace{\nu_2,\dots,\nu_2}_{k_2},\dots,
    \underbrace{\nu_r,\dots,\nu_r}_{k_r}) \, .
\]
For example, the $(d,k)$-hypersimplex is the permutahedron $\Delta(d,k) =
\Pi(1^k,0^{d-k})$. 

The facets of permutahedra were described by
Billera-Sarangarajan~\cite{BilleraSarangarajan}. We recall their
characterization. We write $I^c \defeq [d] \setminus I$ for the complement of
$I \subseteq [d]$.

\begin{thm}[{\cite[Theorem~3.2]{BilleraSarangarajan}}]\label{thm:perm_facets}
    Let $P = \Pi(\nu_1^{k_1},\dots,\nu_r^{k_r})$ and $\c \in \R^d$. Then
    $P^\c$ is a facet if and only if $\c = \alpha \e_I + \beta \e_{I^\c}$ for
    some $\alpha > \beta$ and $\emptyset \neq I \subset [d]$ and $h = |I|$
    satisfies
    \begin{compactenum}[\rm (a)]
        \item $k_1 + 1 \le h \le d- k_r - 1$, or
        \item $h = 1$ if $k_1 < d - 1$, or
        \item $h = d-1$ if $k_r < d - 1$.
    \end{compactenum}
\end{thm}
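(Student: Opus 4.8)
The plan is to reduce to the case that $\c$ is decreasing and then read off the face $P^\c$ explicitly. Using $\Pi(\p)^{\sigma\u} = \sigma^{-1}\Pi(\p)^\u$, every facet is the image under a permutation of a facet $P^\c$ with $\c$ decreasing, so I may assume $c_1 \ge c_2 \ge \cdots \ge c_d$ (and $\p$ non-constant, so that $\dim P = d-1$). Let $(B_1,\dots,B_m)$ be the ordered partition of $[d]$ into the level sets of $\c$, listed by strictly decreasing $\c$-value, and set $b_t \defeq |B_t|$. Since constant $\c$ gives the whole polytope, a proper face has $m \ge 2$. By the rearrangement inequality, a point $\sigma\p$ maximizes $\inner{\c,\cdot}$ over $\Pi(\p)$ exactly when, reading the sorted entries of $\p$ from largest to smallest, the first $b_1$ of them occupy the coordinates $B_1$, the next $b_2$ occupy $B_2$, and so on. Writing $\p^{(t)}$ for the $t$-th chunk of entries, this identifies the maximal face as the Cartesian product $P^\c = \prod_{t=1}^m \Pi_{B_t}(\p^{(t)})$ realized in the coordinate subspaces $\R^{B_t}$.

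Next I would count dimensions. Since $\Pi(\q)$ has affine hull $H(\sum_i q_i)$, it has dimension $b_t - 1$ when $\p^{(t)}$ is non-constant and dimension $0$ when $\p^{(t)}$ is constant. Hence $\dim P^\c = \sum_{t=1}^m \dim\Pi(\p^{(t)}) = (d-m) - L$, where $L \defeq \sum_{t}(b_t - 1)$ with the sum running over those blocks whose chunk $\p^{(t)}$ is constant (singleton blocks contribute $0$). As $\dim P = d-1$, the face $P^\c$ is a facet precisely when $(d-m)-L = d-2$, i.e.\ when $m + L = 2$. Because $m \ge 2$ and $L \ge 0$, this forces $m = 2$ and $L = 0$.

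The condition $m = 2$ says exactly that $\c$ takes two values, i.e.\ $\c = \alpha\e_I + \beta\e_{I^c}$ with $\alpha > \beta$ and $I = B_1$; put $h \defeq |I|$. The chunk on $I$ consists of the top $h$ entries of $\p$ and is constant iff $h \le k_1$, while the chunk on $I^c$ consists of the bottom $d-h$ entries and is constant iff $d-h \le k_r$. The requirement $L = 0$ is that no block of size $\ge 2$ has a constant chunk; splitting according to whether each block is a singleton or is genuinely non-constant then produces the three alternatives: both blocks non-constant gives the interior range (a), while the boundary values $h = 1$ and $h = d-1$ are admissible exactly under the stated multiplicity conditions (b) and (c). Finally, undoing the reduction to decreasing $\c$ (the two-value form $\alpha\e_I + \beta\e_{I^c}$ is preserved by permuting coordinates) yields the statement for arbitrary $\c$.

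The step I expect to be most delicate is establishing the product decomposition $P^\c = \prod_t \Pi_{B_t}(\p^{(t)})$: one must verify that a vertex is $\inner{\c,\cdot}$-maximal if and only if its restriction to each block is an \emph{arbitrary} rearrangement of the assigned chunk, so that the maximal face is genuinely a Cartesian product and its dimension is additive over the blocks. Care is also needed because $\Pi(\p)$ is not full-dimensional, so every dimension count must be read inside the affine hull $H(\sum_i p_i)$, and the singleton blocks must be handled consistently in the bookkeeping of $L$. As an alternative to the product description, one could instead use Proposition~\ref{prop:perm_Mink} to write $P$ as a Minkowski sum of dilated hypersimplices and match facet normals with the rays of the common refinement of their normal fans; but the direct product computation above seems the more transparent route.
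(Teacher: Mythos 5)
The paper offers no proof of this theorem---it is quoted verbatim from Billera--Sarangarajan---so there is nothing to compare your argument to except the statement itself. Your strategy is the standard and correct one: reduce to decreasing $\c$, identify $P^{\c}$ with the product $\prod_t \Pi_{B_t}(\p^{(t)})$ over the level sets of $\c$ (the delicate point you flag, which follows from the equality case of the rearrangement inequality: a vertex is optimal iff the multiset of its values on each block $B_t$ equals the $t$-th chunk of the sorted $\p$), and count dimensions blockwise. The reduction of the facet condition to $m=2$ and $L=0$ is sound.

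The problem is the last step, where you assert that the case analysis ``produces the stated multiplicity conditions (b) and (c)''. It does not. By your own criterion, for $h=1$ the block $I$ is a singleton, so the only constraint is that the chunk on $I^{c}$---the bottom $d-1$ entries of $\p$---be non-constant, which says $k_r<d-1$; symmetrically, $h=d-1$ requires the top $d-1$ entries to be non-constant, i.e.\ $k_1<d-1$. That is the statement with the roles of $k_1$ and $k_r$ in (b) and (c) interchanged. Indeed the theorem as printed is false: take $P=\Pi(1^3,0^1)=\Delta(4,3)$, a tetrahedron with $d=4$, $k_1=3=d-1$, $k_r=1$. Then $\c=\e_1$ (so $h=1$) cuts out $\conv(\e_A : |A|=3,\ 1\in A)$, a triangular facet, even though (b) fails; while $\c=-\e_1$ (so $I=\{2,3,4\}$, $h=d-1$) cuts out only the vertex $\e_{\{2,3,4\}}$, even though (c) holds. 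So your method is correct and in fact proves the corrected statement, but you cannot claim to have derived (b) and (c) as written---doing the bookkeeping honestly reveals that the two side conditions must be swapped. (This does not affect the paper's later use of the theorem: in Proposition~\ref{prop:facets} one has $1\le k<l<d$, hence $k_1=k<d-1$ and $k_r=d-l<d-1$ both hold and the swap is invisible there.)
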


The theorem shows, for example, that $\Delta(d,k)$ for $1 < k < d-1$ has $2d$
facets with normals given by $\pm \e_1,\dots, \pm \e_d$.

In order to determine the facets of $\Delta(d,S)$, we appeal to the
decomposition~\eqref{eqn:decomp}. Let $S = \{ 0 \le s_1 < s_2 < \cdots < s_k
\le d \}$ be proper so that $\Delta(d,S) \subset \R^d$ is full-dimensional. We
write $\1 \defeq \e_{[d]}$ for the all-ones vector. If $s_1 > 0$, then
$\Delta(d,s_1) = \Delta(d,S)^{-\1}$ is a facet. Likewise, if $s_d < d$, then
$\Delta(d,s_k) = \Delta(d,S)^{\1}$ is a facet. If $F \subset \Delta(d,S)$ is
any other facet, then its vertices cannot have all the same cardinality. If
$s_i \in S$ is the minimal cardinality of a vertex in $F$, then $F \cap
\Delta(d,s_i,s_{i+1})$ is a facet of $\Delta(d,s_i,s_{i+1})$.  Hence, as a
first step, we determine the facets of $\Delta(d,s_i,s_{i+1})$ that are not
equal to $\Delta(d,s_i)$ and $\Delta(d,s_{i+1})$.

\newcommand\oDelta{\overline{\Delta}}%
Let $S = \{k < l\}$ be proper. An easy calculation shows that 
\[
    \Delta(d,k,l) \cap H(\tfrac{k+l}{2}) \ = \ \tfrac{1}{2} ( \Delta(d,k) +
    \Delta(d,l) ) \, .
\]
Moreover, if $F \subset \Delta(d,k,l)$ is a facet, then $F \cap
H(\tfrac{k+l}{2})$ is a facet of the right-hand side and every facet arises
that way. Hence it suffices to determine the facets of $\oDelta(d,k,l) \defeq
\Delta(d,k) + \Delta(d,l)$.  We will need the notion of a join of two
polytopes: If $P,Q \subset \R^d$ are polytopes such that their affine hulls
are \emph{skew}, i.e., non-parallel and disjoint, then $P \ast Q \defeq
\conv(P \cup Q)$ is called the \Def{join} of $P$ and $Q$. Every
$k$-dimensional face of $P \ast Q$ is of the form $F \ast G$ where $F
\subseteq P$ and $G \subseteq G$ are (possibly empty) faces with $\dim F +
\dim G = k-1$.

\begin{prop}\label{prop:facets}
    Let $1 \le k < l < d$. In addition to the facets $\Delta(d,k,l)^\1 =
    \Delta(d,l)$ and $\Delta(d,k,l)^{-\1} = \Delta(d,k)$, there are 
    \[
        \Delta(d,k,l)^{\e_i} \ \cong \ \Delta(d-1,k-1,l-1) \quad \text{ and }
        \quad \Delta(d,k,l)^{-\e_i} \ \cong \ \Delta(d-1,k,l) 
    \]
    for $i=1,\dots,d$.  Every other facet is of the form
    \[
        \Delta(d,k,l)^\c \ \cong \ \Delta(h,k) \ast \Delta(d-h,l-h)
    \]
    where $\c = (l-h) \e_I - (h - k) \e_{I^c}$ for any 
    $\emptyset \neq I \subset [d]$ with $k < h \defeq |I| < l$.
\end{prop}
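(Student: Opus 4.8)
The plan is to reduce the problem to computing the facets of $\oDelta(d,k,l) = \Delta(d,k) + \Delta(d,l)$, as the excerpt already does, and then to apply the Billera--Sarangarajan facet characterization (Theorem~\ref{thm:perm_facets}) via Proposition~\ref{prop:perm_Mink}. The key observation is that $\Delta(d,k) = \Pi(1^k,0^{d-k})$ and $\Delta(d,l) = \Pi(1^l,0^{d-l})$, so by Proposition~\ref{prop:perm_Mink} their Minkowski sum is $\Pi(\p)$ where $\p = (1^k,0^{d-k}) + (1^l,0^{d-l}) = (2^k,1^{l-k},0^{d-l})$. This is a permutahedron with $r = 3$ distinct coordinate values (when $k < l$), so Theorem~\ref{thm:perm_facets} applies with $k_1 = k$, $k_2 = l-k$, $k_3 = d-l$ and $\nu_1 = 2, \nu_2 = 1, \nu_3 = 0$.

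First I would read off the facet normals $\c = \alpha\e_I + \beta\e_{I^c}$ from Theorem~\ref{thm:perm_facets} for this $\p$, where $h = |I|$ ranges over the allowed values. Condition (a) gives $k+1 \le h \le d - (d-l) - 1 = l-1$, i.e. $k < h < l$; condition (b) gives $h = 1$ when $k < d-1$; and condition (c) gives $h = d-1$ when $d - l < d-1$, i.e. $l > 1$. The cases $h = 1$ and $h = d-1$ should correspond to the coordinate-direction facets $\Delta(d,k,l)^{\e_i}$ and $\Delta(d,k,l)^{-\e_i}$, while the ``interior'' range $k < h < l$ yields the remaining facets claimed in the proposition. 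The main computational step is to translate each facet of $\oDelta(d,k,l) = \Pi(\p)$ back into a facet of $\Delta(d,k,l)$: since $\Delta(d,k,l)\cap H(\tfrac{k+l}{2}) = \tfrac12\oDelta(d,k,l)$ and facets of $\Delta(d,k,l)$ (other than the two hypersimplex facets) restrict bijectively to facets of the right-hand side, the facet of $\Delta(d,k,l)$ in a given direction $\c$ is recovered as the join of its two ``layers'' in $H(k)$ and $H(l)$.

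To identify the join structure, I would compute for each admissible $\c = \alpha\e_I + \beta\e_{I^c}$ (with $\alpha > \beta$) the faces $\Delta(d,k)^\c$ and $\Delta(d,l)^\c$ separately, using that $\Delta(d,m)^\c$ maximizes the number of $1$-entries placed on the large-weight coordinate block $I$. When $k < h = |I| < l$, the face $\Delta(d,k)^\c$ consists of all $\e_A$ with $|A| = k$ and $A \subseteq I$ (there is room since $k < h$), which is a copy of $\Delta(h,k)$; meanwhile $\Delta(d,l)^\c$ forces $I \subseteq A$ and places the remaining $l - h$ ones in $I^c$, giving a copy of $\Delta(d-h, l-h)$. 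These two faces lie in skew affine subspaces (one in the ``level $k$'' hyperplane $H(k)$, the other in $H(l)$, and their supports are complementary on $I$), so their convex hull is the join $\Delta(h,k) \ast \Delta(d-h,l-h)$, matching the claimed normal $\c = (l-h)\e_I - (h-k)\e_{I^c}$ after rescaling $\alpha,\beta$. The coordinate facets $h=1,d-1$ I would verify directly against the recursive descriptions $\Delta(d,k,l)^{\e_i}\cong\Delta(d-1,k-1,l-1)$ and $\Delta(d,k,l)^{-\e_i}\cong\Delta(d-1,k,l)$, which follow from~\eqref{eqn:Delta_facets}.

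The hard part will be verifying that the two layers genuinely form a \emph{join} rather than merely a convex hull -- that is, confirming that their affine hulls are skew (non-parallel and disjoint), so that the face count and the formula $\dim(F\ast G) = \dim F + \dim G + 1$ are valid, and that no additional facets arise beyond those listed. Skewness follows because $\Delta(h,k)$ sits in $H(k)$ while $\Delta(d-h,l-h)$ sits in $H(l) \ne H(k)$, and the coordinate $\c$ separates the two blocks, but one must check carefully that the relevant affine hulls within $H(\tfrac{k+l}{2})$ (after the $\tfrac12$-scaling) do not accidentally become parallel in degenerate ranges of $k,l,h$. I also expect the bookkeeping of which boundary values of $h$ produce the coordinate facets versus the join facets to require attention, so that the three sources of facets (the two hypersimplices, the coordinate facets, and the join facets) account for every facet exactly once.
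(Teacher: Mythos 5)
Your proposal follows the paper's own argument essentially step for step: reduce to $\oDelta(d,k,l)=\Delta(d,k)+\Delta(d,l)=\Pi(2^k,1^{l-k},0^{d-l})$ via Proposition~\ref{prop:perm_Mink}, read off the admissible normals from Theorem~\ref{thm:perm_facets}, match $|I|\in\{1,d-1\}$ with the coordinate facets, and for $k<|I|<l$ compute the supporting value of $\c=(l-h)\e_I-(h-k)\e_{I^c}$ on each layer to exhibit the facet as the join $\Delta(h,k)\ast\Delta(d-h,l-h)$, with skewness coming from the two layers lying in $H(k)$ and $H(l)$. This is the same approach as the paper and is correct.
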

\begin{proof}
    We first determine the facets of $\oDelta(d,k,l)$.  Using
    Proposition~\ref{prop:perm_Mink}, we see that $\oDelta(d,k,l)$ is the
    permutahedron $\Pi(2^k,1^{l-k},0^{d-l})$.  Theorem~\ref{thm:perm_facets}
    yields that the facet directions of $\oDelta(d,k,l)$ are given $\c =
    \alpha \e_I + \beta \e_{I^c}$ for $\emptyset \neq I \subset [d]$ with $|I|
    = 1$, $|I| = d-1$, or $k < |I| < l$ and $\alpha > \beta$. In particular,
    for every $I$ there is, up to scaling, a unique choice for $\alpha$ and
    $\beta$ so that $\Delta(d,k,l)^\c$ is a facet.

    For $I = \{i\}$ we already observed that $\c = \e_I = \e_i$ yields a facet
    linearly isomorphic to $\Delta(d-1,k-1,l-1)$. Likewise, for $[d]
    \setminus I = \{j\}$, we obtain for $\c = \e_I - \1 = -\e_{j}$ a facet
    that is linearly isomorphic to $\Delta(d-1,k,l)$.

    For $I \subseteq [d]$ with $k < |I| < l$, we observe that $\e_A
    \in \Delta(d,k)^{\e_I}$ if and only if $A \subset I$ and $\e_A \in
    \Delta(d,l)^{\e_I}$ if and only if $I \subset A$. Set $h \defeq
    |I|$ and $\c = (l-h) \e_I - (h - k) \e_{I^c}$. For $A \in
    \binom{[d]}{k}$ we compute
    \[
        \inner{\c,\e_A} \ = \ 
        (l-h) |A \cap I| - (h-k) |A \cap I^c| \ \le \ (l-h) k
    \]
    with equality if and only if $A \subset I$.  For $A \in \binom{[d]}{l}$,
    we compute
    \[
        \inner{\c,\e_A} \ = \ 
        (l-h) |A \cap I| - (h-k) |A \cap I^c| \ \le \ (l-h) h - (h-k) (l-h)
        \ = \ (l-h) k
    \]
    with equality if and only if $I \subset A$. Hence the hyperplane $H = \{
    \x : \inner{\c,\x} = (l-h) k \}$ supports $\Delta(d,k,l)$ in a facet,
    since $H$ also supports a facet of $\oDelta(d, k, l)$. In particular,
    $\Delta(d,k) \cap H \cong \Delta(h,k)$ under the identification $\{ \x :
    x_i = 0 \text{ for } i \not\in I \} \cong \R^h$.  Likewise $\Delta(d,l)
    \cap H \cong \Delta(d-h,l-h)$ under the identification $\{ \x : x_i = 1
    \text{ for } i \in I \} \cong \R^{d-h}$.  This also shows that the given
    subspaces are skew and, since they lie in $H(k)$ and $H(l)$ respectively,
    are disjoint. This shows that $\Delta(d,l,k) \cong \Delta(h,k)
    \ast \Delta(d-h,l-h)$.
\end{proof}

It follows from Proposition~\ref{prop:facets} that $\Delta(d,k,l)$ and
$\Delta(d,l,m)$ for $0 < k < l < m < d$ never have facet normals of type (v)
in common. This gives us the following description of facets of
$S$-hypersimplices; see also~\cite{groetschel}.

\begin{thm}\label{thm:facets}
    Let $S = \{ 0 \le s_1 < \cdots < s_k \le d\}$ be proper. Then
    $\Delta(d,S)$ has the following facets
    \begin{compactenum}[\rm (i)]
    \item $\Delta(d,S)^{\1} = \Delta(d,s_k)$ provided $s_k < d$;
    \item $\Delta(d,S)^{-\1} = \Delta(d,s_1)$ provided $0 < s_1$;
    \item $\Delta(d,S)^{\e_i} \cong \Delta(d-1,S^+)$ for $i =
      1,\dots,d$ provided $S^+$ is proper;
    \item $\Delta(d,S)^{-\e_i} \cong \Delta(d-1,S^-)$ for $i = 1,\dots,d$
        provided $S^-$ is proper;
    \item $\Delta(d,S)^{\u_I} \cong \Delta(h, h - s_i) \ast
      \Delta(d-h,s_{i+1}-h)$ where $I \subset [d]$ with $s_i < |I|
      \eqdef h < s_{i+1}$ for some $0 < i < k$ and $\u_I \defeq (s_{i+1}-h)
      \e_I - (h - s_i) \e_{I^c}$.
    \end{compactenum}
\end{thm}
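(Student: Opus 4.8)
The plan is to combine the decomposition \eqref{eqn:decomp} with the facet description of the Cayley polytopes $\Delta(d,s_i,s_{i+1})$ from Proposition~\ref{prop:facets}. Since $\Delta(d,S)$ is glued from the slabs $\Delta(d,s_i,s_{i+1})$ along the common hypersimplices $\Delta(d,s_i)$, a facet of $\Delta(d,S)$ either lies entirely in one extreme slab (the top or bottom faces, giving (i) and (ii)), or its supporting hyperplane must simultaneously support all the intermediate slabs it meets. So first I would observe that a facet $F$ of $\Delta(d,S)$ with supporting direction $\c$ restricts, on each slab $\Delta(d,s_i,s_{i+1})$ that $F$ meets in dimension $d-1$, to a facet of that slab, and conversely a direction gives a facet of $\Delta(d,S)$ exactly when it is a valid facet direction for every slab it touches.

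Next I would run through the five types produced by Proposition~\ref{prop:facets} applied to each slab and check which of them glue consistently across slabs. The directions $\pm\e_i$ are facet directions for \emph{every} Cayley polytope $\Delta(d,k,l)$, so they persist globally: $\Delta(d,S)^{\e_i}$ restricts on each slab to the $\e_i$-facet, and these fit together to give $\Delta(d,S)^{\e_i}\cong\Delta(d-1,S^+)$ by \eqref{eqn:Delta_facets}, yielding (iii); symmetrically $-\e_i$ gives (iv). The $\pm\1$ directions are extremal: $\1$ picks out only the top slab and produces $\Delta(d,s_k)$ when $s_k<d$, giving (i), and $-\1$ gives (ii). The provisos ``$S^\pm$ proper'' and ``$s_1>0$'', ``$s_k<d$'' are exactly the conditions that keep these candidate faces of full dimension $d-1$ rather than collapsing into a lower-dimensional face or coinciding with one another.

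For type (v) I would use the remark stated just before the theorem: by Proposition~\ref{prop:facets} a direction $\c=\alpha\e_I+\beta\e_{I^c}$ with $k<|I|<l$ is a facet direction of $\Delta(d,k,l)$ only when $|I|$ lies strictly between the two cardinalities of \emph{that} slab, and the required scaling $(l-h)\e_I-(h-k)\e_{I^c}$ depends on both endpoints. Hence such a direction cannot be a facet direction of two different adjacent slabs simultaneously, so a type-(v) facet of $\Delta(d,S)$ must be confined to a single slab $\Delta(d,s_i,s_{i+1})$ and is supported by $\u=(s_{i+1}-h)\e_I-(h-s_i)\e_{I^c}$ with $s_i<|I|=h<s_{i+1}$; translating $\Delta(d,k,l)^\c\cong\Delta(h,k)\ast\Delta(d-h,l-h)$ with $k=h-s_i$... more precisely with the slab $\{s_i,s_{i+1}\}$ gives $\Delta(h,h-s_i)\ast\Delta(d-h,s_{i+1}-h)$ as in (v). I would then argue completeness: every facet of $\Delta(d,S)$ arises by intersecting with the slabs, its minimal-cardinality vertices determine an index $i$, and $F\cap\Delta(d,s_i,s_{i+1})$ is a facet of that slab, so $F$ is accounted for by one of (i)--(v).

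The main obstacle I expect is the gluing/compatibility argument for type (v)---making rigorous that a supporting hyperplane of $\Delta(d,S)$ whose direction is of the split type $\alpha\e_I+\beta\e_{I^c}$ truly cannot straddle two slabs, and dually that each genuine facet's supporting hyperplane is forced to be of one of the listed types. This amounts to checking that the normal cones of the slabs fit together without creating new facet directions, and that no two of the listed facets accidentally coincide; the explicit inner-product computations from Proposition~\ref{prop:facets} should carry this through, but keeping track of the provisos (when $S^\pm$ fails to be proper, when $s_1=0$ or $s_k=d$) to avoid double-counting or degenerate faces is the delicate bookkeeping.
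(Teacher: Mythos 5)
Your proposal is correct and follows essentially the same route as the paper: the paper's own (very terse) proof likewise combines the decomposition \eqref{eqn:decomp} with Proposition~\ref{prop:facets} applied to each slab $\Delta(d,s_i,s_{i+1})$, together with the remark that adjacent slabs never share a type-(v) facet normal, and your write-up simply makes explicit the gluing and completeness steps that the paper leaves implicit. The only cosmetic point is that Proposition~\ref{prop:facets} yields $\Delta(h,s_i)\ast\Delta(d-h,s_{i+1}-h)$, which matches the $\Delta(h,h-s_i)$ of statement (v) via the coordinate flip $\x\mapsto\1-\x$ on the first factor.
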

\begin{proof}
    By decomposition~\eqref{eqn:decomp}, every facet $F$ of $\Delta(d,S)$
    determines a facet of $\Delta(d, s_i, s_{i+1})$ for some $1 \le i < k$ and
    $F$ is decomposed by this collection of facets. By examining the possible
    facet normals of $\Delta(d, s_i, s_{i+1})$, the statement readily follows.
\end{proof}

If $S = [0,d]$, then Theorem~\ref{thm:facets} gives us that $\Cube_d$ has exactly
$2d$ facets in the coordinate directions $\pm \e_i$ for $i=1,\dots,d$. The
facets are again cubes as $[0,d]^\pm = [0,d-1]$.
The $d$-dimensional crosspolytope $\Diamond_d  \cong \Delta(d,\{1,d-1\})$ has
$2^d$ facets.  The two facets of type (i), (ii), and those of type (iii) and
(iv) are simplices. As for type (v) this is a join of two simplices and thus
also a simplex.

The description of combinatorial type of each facet also leads to the number
of $k$-dimensional faces for $0 \le k < d$; cf.~\cite{Ritter-master}.

\renewcommand\S{\mathcal{S}}%
\section{Pulling triangulations}\label{sec:pull}

A \Def{subdivision} $\S$ of a $d$-dimensional polytope $P \subset \R^d$ is a
collection $\S = \{P_1,\dots,P_m\}$ of $d$-polytopes such that $P = P_1 \cup
\cdots \cup P_m$ and $P_i \cap P_j$ is a face of $P_i$ and $P_j$ for all $1
\le i < j \le m$. If all polytopes $P_i$ are simplices, then $\S$ is called a
\Def{triangulation}.  Triangulations are the method-of-choice for various
computations on polytopes including volume, lattice point counting, or, more
generally, computing valuations; see~\cite{Triang}.

\newcommand\Pull{\mathsf{Pull}}%
A powerful method for computing a triangulation is the so-called \emph{pulling
triangulation}. Let $P$ be a \mbox{$d$-polytope} and $\v \in V(P)$ a vertex. Let
$F_1,\dots,F_m$ be the facets of $P$ not containing $\v$. A key insight is
that the collection of polytopes
\[
    P_i \ \defeq \ \v \ast F_i \  \defeq \ \conv( F_i \cup \{\v\} ) \qquad
    \text{ for } i =
    1,\dots, m
\]
constitutes a subdivision of $P$. This idea can be extended to obtain
triangulations. Let $\preceq$ be a partial order on the vertex set $V(P)$ such
that every nonempty face $F \subseteq P$ has a unique minimal element with
respect to $\preceq$. We denote the minimal vertex of $F$ by $\v_F$.  The
\Def{pulling triangulation} $\Pull_\preceq(P)$ of $P$ is recursively defined as
follows. If $P$ is a simplex, then $\Pull_\preceq(P) = \{ P \}$. Otherwise, we
define
\begin{equation}\label{eqn:pull}
    \Pull_\preceq(P) \ = \ \bigcup_{F} \ \v_P \ast \Pull_\preceq(F) \, ,
\end{equation}
where the union is over all facets $F \subset P$ that do not contain
$\v_P$ and where $\v_P \ast \Pull_\preceq(F) \defeq \{ \v_P \ast Q : Q
\in \Pull_\preceq(F) \}$.

For the cube $\Cube_d$, or more generally the class of \emph{compressed}
polytopes~\cite{sullivant}, it can be shown that every simplex $S$ in a
pulling triangulation of $\Cube_d$ has the same volume $\frac{1}{d!}$. Thus, every
pulling triangulation has exactly $d!$ many simplices, independent of the
chosen order $\preceq$.

Recall that the halfcube is the $S$-hypersimplex $H_d = \Delta(d,[0,d]
\cap 2\Z)$. For $d \ge 5$ it is not true that the simplices in a pulling
triangulation of $H_d$ all have the same volume. The main result of this
section is that still the number of simplices in a pulling triangulation is
independent of the choice of $\preceq$.

\begin{thm}\label{thm:H_pull}
    Every pulling triangulation of $H_d$ has the same number of simplices. The
    number  of simplices $t(d) \defeq |\Pull_\preceq(H_d)|$ is given by
    \[
        t(d) \ = \ \sum_{l=3}^{d} \frac{d!}{l!} \left(2^{l-1} - l \right) \, .
    \]
\end{thm}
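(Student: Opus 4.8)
The plan is to prove, by induction on $d$, the stronger statement that the pulling counts of \emph{both} the even halfcube $H_d = \Delta(d,\{0,2,4,\dots\})$ and the odd halfcube $\Delta(d,\{1,3,5,\dots\})$ are independent of the order $\preceq$, and to read off a recursion for their common value along the way. Writing $a(d)\defeq|\Pull_\preceq(H_d)|$ and $b(d)\defeq|\Pull_\preceq(\Delta(d,\{1,3,5,\dots\}))|$, the engine is the elementary identity
\[
    |\Pull_\preceq(P)| \ = \ \sum_{F}\, |\Pull_\preceq(F)| \, ,
\]
where $F$ ranges over the facets of $P$ not containing the $\preceq$-minimal vertex $\v_P$; this is immediate from~\eqref{eqn:pull}, since $\v_P \ast Q$ is a simplex exactly when $Q$ is, and distinct pairs $(F,Q)$ give distinct full-dimensional simplices. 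The base cases are those halfcubes that happen to be simplices (such as $H_3$ and $\Delta(3,\{1,3\})$), where the count is $1$.

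First I would record the facets of both halfcubes from Theorem~\ref{thm:facets}. For either parity, the coordinate facet $\{x_i=0\}$ is a halfcube of the \emph{same} parity on the remaining coordinates, the coordinate facet $\{x_i=1\}$ is a halfcube of the \emph{opposite} parity, and every other facet is a simplex: since consecutive elements of $S$ differ by $2$, each facet of type (v) is $\Delta(h,1)\ast\Delta(d-h,1)$, a join of two simplices and hence a simplex, and the type (i)/(ii) facets $\Delta(d,d-1)$ and $\Delta(d,1)$ are simplices as well. Thus each halfcube has $d$ coordinate facets of each parity together with a family of simplex facets.

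The crucial point, and the main obstacle, is order-independence, for which I would invoke vertex-transitivity. Let $G$ be the group of affine symmetries of the cube generated by coordinate permutations and by the flips $x_i\mapsto 1-x_i$ applied an even number at a time. Flipping the coordinates in $A\triangle B$ sends $\e_A$ to $\e_B$, and $|A\triangle B|$ is even whenever $|A|,|B|$ have the same parity; since an even number of flips preserves the parity of the coordinate sum, $G$ preserves each halfcube and acts transitively on its vertices. Hence for any two vertices there is a $g\in G$ carrying one to the other and inducing a type-preserving bijection between the facets avoiding them. Combined with the inductive hypothesis that $|\Pull_\preceq(F)|$ depends only on the isomorphism type of a lower-dimensional facet $F$, this shows $\sum_{F\not\ni\v_P}|\Pull_\preceq(F)|$ is the same regardless of which vertex is $\v_P$, so $a(d)$ and $b(d)$ are well-defined. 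This is exactly the order-independence claimed in the theorem.

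It then remains to evaluate the recursion, choosing the most convenient apex that vertex-transitivity permits. For $H_d$ I pull from $\v_P=\e_\emptyset$: all $d$ facets $\{x_i=0\}$ contain $\e_\emptyset$ and drop out, all $d$ facets $\{x_i=1\}$ avoid it, and a direct count shows exactly $2^{d-1}-d$ simplex facets avoid $\e_\emptyset$ (the simplex facets through $\e_\emptyset$ being precisely the $d$ of type (v) with $|I|=1$). For the odd halfcube I pull from a weight-one vertex and count in the same way, and the number of simplex facets avoiding the apex again comes out to $2^{d-1}-d$. This gives, for $d\ge 4$,
\begin{align*}
    a(d) \ &= \ d\,b(d-1) + \bigl(2^{d-1}-d\bigr) \, , \\
    b(d) \ &= \ (d-1)\,a(d-1) + b(d-1) + \bigl(2^{d-1}-d\bigr) \, .
\end{align*}
Feeding in the inductive hypothesis $a(d-1)=b(d-1)$, both lines collapse to $t(d)=d\,t(d-1)+2^{d-1}-d$, so $a(d)=b(d)\eqdef t(d)$; a one-line check ($d\cdot t(d-1)=\sum_{l=3}^{d-1}\frac{d!}{l!}(2^{l-1}-l)$, plus the $l=d$ term) confirms that the claimed closed form satisfies this recursion with $t(3)=1$. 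The step I expect to be most delicate is the bookkeeping behind the uniform simplex-facet count $2^{d-1}-d$, which has to be checked separately in the even and odd cases and across both parities of $d$; everything else is the symmetry argument or routine induction.
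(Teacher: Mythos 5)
Your proposal is correct, and its engine --- the recursion $t(d)=d\,t(d-1)+2^{d-1}-d$ extracted from the facet structure of the halfcube via the pulling identity $|\Pull_\preceq(P)|=\sum_{F\not\ni\v_P}|\Pull_\preceq(F)|$ --- is exactly the paper's Proposition on $t(d)$. The differences are in the supporting details. Where you run a coupled recursion for $a(d)$ and $b(d)$ and invoke vertex-transitivity of the group generated by coordinate permutations and even numbers of flips, the paper shortcuts both steps: it observes that $H_d^{\e_i}\cong H_{d-1}$ directly (a single reflection $x_j\mapsto 1-x_j$ turns the ``odd'' coordinate facet back into an even halfcube), so only one quantity $t(d)$ is ever tracked, and it performs the facet count at an \emph{arbitrary} minimal vertex $\e_A$ --- the $d$ coordinate facets $H_d^{\eps_i\e_i}$ with the sign chosen by whether $i\in A$, plus the $2^{d-1}-d$ simplex facets $H_d^{\e_B}$ with $|B|$ odd and $|A\triangle B|>1$ --- which yields order-independence with no appeal to the symmetry group. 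Your transitivity argument is sound and arguably more conceptual, but it makes you re-verify the simplex-facet count in two parities, which is the step you yourself flag as delicate; the paper's uniform count at $\e_A$ handles all vertices at once. Finally, for the closed form the paper does something you don't: it gives a bijection between the simplices of $\Pull_\preceq(H_d)$ and pairs $(\tau,B)$ with $\tau$ a partial permutation and $B\subseteq[d]\setminus\tau$ odd and non-singleton, which produces $\sum_{l=3}^{d}\frac{d!}{l!}(2^{l-1}-l)$ as an honest enumeration; your route of checking that the formula satisfies the recursion is shorter and perfectly adequate for the theorem as stated, but forgoes that combinatorial interpretation.
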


The proof of Theorem~\ref{thm:H_pull} is in two parts. We first show that the
number of simplices of $\Pull_\preceq(H_d)$ is independent of $\preceq$. This
yields a recurrence relation on $t(d)$. In the second part we review 
the construction of $\Pull_\preceq(H_d)$ from the perspective of choosing
facets, which yields a combinatorial interpretation for $t(d)$ and which then
verifies the stated expression.

From Theorem~\ref{thm:facets} we infer the following description of facets of
$H_d$ for $d \ge 3$: For every $i = 1,\dots,d$ we have
\begin{align*}
    H_d^{-\e_i} \ &= \ H_d \cap \{ \x : x_i = 0 \} \ \cong \ H_{d-1} \, ,\\
    H_d^{\e_i} \ &= \ H_d \cap \{ \x : x_i = 1 \} \ \cong \ H_{d-1} \, ,
\end{align*}
where the last isomorphism is realized by reflection in a hyperplane $\{\x :
x_j = \frac 1 2 \}$ for $j \neq i$. The remaining facets of $H_d$ are provided
by Theorem~\ref{thm:facets}(v) and, in case $d$ is odd, by (i): For  
$B \subseteq [d]$ with $|B|$ odd and $\u_B = \e_B - \e_{B^c}$, we have
\[
    H^{\u_B}_d \ = \ H_d \cap \{ \x : \inner{ \e_B, \x} - \inner{ \e_{B^c},
    \x} = |B|-1 \} \ \cong \ \Delta_{d-1} \, .
\]

\begin{prop}\label{prop:H_pull}
    The number $t(d)$ of simplices in a pulling triangulation of $H_d$
    satisfies
    \[
        t(d) \ = \ d \cdot t(d-1) + 2^{d-1} - d
    \]
    for $d \ge 4$ and $t(d) = 1$ for $d \le 3$.
\end{prop}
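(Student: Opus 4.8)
The plan is to establish the recurrence by counting the full-dimensional simplices in a pulling triangulation of $H_d$ and showing the count does not depend on the chosen order $\preceq$. I will argue by induction on $d$, with the base cases $d \le 3$ handled directly: $H_1$ is a point, $H_2$ a segment, and $H_3$ is a simplex (a tetrahedron), so each is already a simplex and $t(d)=1$ trivially.

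For the inductive step with $d \ge 4$, fix any admissible order $\preceq$ and let $\v = \v_{H_d}$ be the $\preceq$-minimal vertex. By the defining recursion~\eqref{eqn:pull}, the simplices of $\Pull_\preceq(H_d)$ are exactly the cones $\v \ast Q$ as $Q$ ranges over $\Pull_\preceq(F)$ for all facets $F$ of $H_d$ not containing $\v$. Hence the total number of simplices is
\[
    t(d) \ = \ \sum_{F \not\ni \v} |\Pull_\preceq(F)| \, ,
\]
the sum being over facets avoiding $\v$. Since $\preceq$ restricts to an admissible order on each facet $F$ and each such facet is itself one of the combinatorial types computed just before the proposition, I can substitute the inductive value of $|\Pull_\preceq(F)|$ depending only on the combinatorial type of $F$. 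The facet list says $H_d$ has $2d$ facets of the form $H_d^{\pm\e_i} \cong H_{d-1}$, each triangulated into $t(d-1)$ simplices, and the simplex facets $H_d^{\e_B} \cong \Delta_{d-1}$ for odd $B$, each contributing exactly $1$ simplex. The number of odd-cardinality subsets $B \subseteq [d]$ is $2^{d-1}$.

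The key step, and the main obstacle, is to count correctly how many facets avoid $\v$, since the answer must come out independent of which vertex $\v$ happens to be minimal. By symmetry under the hyperoctahedral group acting on $H_d$ I may assume $\v = \e_\emptyset = \0$. Among the $2d$ facets $H_d^{\pm\e_i}$, exactly $d$ contain $\0$ (namely the $H_d^{-\e_i}$, on which $x_i=0$) and $d$ avoid it (the $H_d^{\e_i}$), so these contribute $d\cdot t(d-1)$. Among the $2^{d-1}$ simplex facets $H_d^{\e_B}$ with $|B|$ odd, the vertex $\0$ lies on $H_d^{\e_B}$ precisely when $\inner{\e_B,\0}-\inner{\e_{B^c},\0}=|B|-1$, i.e.\ when $|B|=1$; there are exactly $d$ such singletons, so $2^{d-1}-d$ of the simplex facets avoid $\0$, each contributing $1$. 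Summing gives
\[
    t(d) \ = \ d\, t(d-1) + \left(2^{d-1}-d\right) \, ,
\]
as claimed. The only subtlety to verify carefully is that the count of avoided facets is genuinely the same for every vertex (not merely for $\0$), which follows because the hyperoctahedral symmetry group of $H_d$ acts transitively on the vertices and permutes the facets, so the number avoiding any fixed vertex equals the number avoiding $\0$; this is exactly what makes $t(d)$ independent of $\preceq$.
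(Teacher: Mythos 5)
Your proof is correct and follows essentially the same route as the paper: induction on $d$ using the facet description of $H_d$ and the pulling recursion, counting the $d$ halfcube facets and the $2^{d-1}-d$ simplex facets that avoid the minimal vertex. The only differences are cosmetic: the paper performs the count directly for an arbitrary minimal vertex $\e_A$ (the simplex facets $H_d^{\e_B}$ containing $\e_A$ being exactly those with $|A \triangle B| = 1$), whereas you reduce to $\v = \0$ by symmetry --- note that only the index-two subgroup of the hyperoctahedral group (coordinate permutations together with \emph{even} numbers of sign changes) preserves $H_d$, but that subgroup is still vertex-transitive, so your reduction goes through.
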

\begin{proof}
    We prove the result by induction on $d$. For $d=1,2,3$, we note that $H_d$
    is itself a simplex and thus there is nothing to prove.

    For $d \ge 4$, let $A \subseteq [d]$ be an even subset such that $\e_A \in
    \ZO^d$ is the minimal vertex of $P$ with respect to $\preceq$. By the
    discussion preceeding the proposition, the facets not containing $\e_A$
    are $H_d^{\e_i} \cong H_{d-1}$ for $i \not\in A$, $H_d^{-\e_i} \cong
    H_{d-1}$ for $i \in A$, and $H_d^{\u_B} \cong \Delta_{d-1}$ for 
    \[
        B \in \mathcal{B} \ \defeq \ \{ B \subseteq [d] : |B| \text{ odd, } 
    |A \triangle B| > 1 \} \, .
    \]
    Note that $|\mathcal{B}| = 2^{d-1}-d$.  Thus it follows
    from~\eqref{eqn:pull} that 
    \begin{align*}
    t(d) \ &= \ |\Pull_\preceq(H_d)| \ = \ 
    \sum_{i\in A} |\Pull_\preceq(H_d^{-\e_i})| + 
    \sum_{i\not\in A} |\Pull_\preceq(H_d^{\e_i})| + 
    \sum_{B \in \mathcal{B}} |\Pull_\preceq(H_d^{\u_B})| \\
    &= \ d \cdot t(d-1) + 2^{d-1} - d \, ,
    \end{align*}
    where the last equality follows by induction.
\end{proof}

Let $P \subset \R^d$ be a full-dimensional polytope with suitable partial
order $\preceq$ on $V(P)$. Every simplex in $\Pull_{\preceq}(P)$ corresponds
to a chain of faces 
\begin{equation}\label{eqn:pull_simp}
    P \ = \ G_0  \ \supset \ G_1 \ \supset \ G_2 \ \supset \ \cdots \ \supset
    \ G_k
\end{equation}
such that $\dim G_i = d-i$ and $G_k$ is a simplex of dimension
$d-k$. The corresponding simplex is then given by $\v_{G_0} \ast
\v_{G_1} \ast \cdots \ast G_k$.  If $P$ is a simple polytope with
facets $F_1,\dots,F_m$, then any such chain of faces is given by an
ordered sequence of distinct indices $h_1, h_2, \dots, h_k$ such that
\[
    G_i \ = \ F_{h_1} \cap F_{h_2} \cap \cdots \cap F_{h_i} 
\]
for all $i=0,\dots,k$. 

For the $d$-dimensional cube $\Cube_d$, the facets can be described by  
$(i,\delta) \in [d] \times \ZO$ so that 
\[
    K_{i}^\delta \ \defeq \ \Cube_d \cap \{ x_i = \delta \} \ \cong \ \Cube_{d-1} \, .
\]
The only faces of $\Cube_d$ that are simplices have dimensions $\le 1$ and thus
simplices in $\Pull_{\preceq}(\Cube_d)$ correspond to sequences $(i_1,\delta_1),
\dots, (i_{d-1},\delta_{d-1}) \in [d] \times \ZO$ with $i_s \neq i_t$ for
$s \neq t$. Thus, if we choose $i_d$ such that $\{i_1,\dots,i_{d-1},i_d\} =
[d]$, then every simplex of $\Pull_{\preceq}(\Cube_d)$ determines a permutation
$\sigma = i_1i_2\cdots i_d$ of $[d]$. 

Observe that for any vertex $\v \in \Cube_d$ and $i \in [d]$, we have that $\v \in
K_i^0$ or $\v \in K_i^1$. This means that for any permutation $\sigma =
i_1i_2\cdots i_d$ of $[d]$ there are $\delta_1,\delta_2,\dots,\delta_{d-1} \in
\ZO$ such that $(i_1,\delta_1),\dots,(i_{d-1},\delta_{d-1})$ come from a simplex
in $\Pull_{\preceq}(\Cube_d)$. This shows that $|\Pull_{\preceq}(\Cube_d)| = d!$
independent of the order $\preceq$.

We call a sequence $\tau = i_1i_2\dots i_k$ with $i_1,\dots,i_k \in [d]$ a
\Def{partial permutation} if $i_s \neq i_t$ for $s \neq t$. We simply write
$[d] \setminus \tau$ for $[d] \setminus \{ i_1, \dots, i_k \}$. The following
Proposition completes the proof of Theorem~\ref{thm:H_pull}.

\begin{prop}
    For any suitable partial order $\preceq$, the simplices of
    $\Pull_\preceq(H_d)$ for $d \ge 3$ are in bijection to pairs $(\tau, B)$
    where $\tau$ is a partial permutation of $[d]$ and $B \subseteq [d]
    \setminus \tau$ is a non-singleton subset of odd cardinality.
\end{prop}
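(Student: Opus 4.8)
The plan is to read the pair $(\tau,B)$ directly off the recursive construction~\eqref{eqn:pull} of $\Pull_\preceq(H_d)$ and then to invert this reading. Every simplex of $\Pull_\preceq(H_d)$ comes from a chain of faces $H_d = G_0\supset G_1\supset\cdots\supset G_m$ as in~\eqref{eqn:pull_simp} with $G_m$ a simplex, and I would first pin down the rigid shape of this chain. A non-terminal face $G_s$ (with $s<m$) is not a simplex, and since any simplex facet of a halfcube is itself a simplex and would terminate the recursion, every non-terminal step must be a coordinate facet $G_s = G_{s-1}\cap\{x_{i_s}=\delta_s\}$, which is again a non-simplicial halfcube. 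Hence the chain is a run of coordinate facets --- the \emph{main line} --- that ends either by branching into a simplex facet $G_m = G_{m-1}^{\e_{B'}}\cong\Delta_{l-1}$ (with $|B'|$ odd) at a level $G_{m-1}\cong H_l$, $l\ge4$, or by bottoming out at $G_m\cong H_3$. In either case let $R\subseteq[d]$ be the coordinates left unfixed by the main line and $l\defeq|R|\ge3$.

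Next I would extract $\tau$ and $B$. Set $\tau\defeq i_1i_2\cdots i_k$, recording the fixed coordinates in order, so that $k=|\tau| = d-l$ and $R = [d]\setminus\tau$. Exactly as for $\Cube_d$, the values $\delta_s$ carry no information: of the two coordinate facets $G_{s-1}\cap\{x_{i_s}=0\}$ and $G_{s-1}\cap\{x_{i_s}=1\}$, only the one with $\delta_s = 1-(\v_{G_{s-1}})_{i_s}$ avoids the minimal vertex $\v_{G_{s-1}}$, and only facets avoiding $\v_{G_{s-1}}$ occur in~\eqref{eqn:pull}; so the partial permutation $\tau$ alone records the main line. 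For $B$, take the minimal vertex $\e_A = \v_{G_{m-1}}$ at the terminal level, split $A = A_0\sqcup A_R$ with $A_R = A\cap R$, and in the branching case put $B\defeq A_R\triangle B'\subseteq R$. Since the restrictions to $R$ of vertices of $G_{m-1}$ have cardinality $\equiv|A_0|\pmod2$ while $|B'|\equiv|A_0|+1\pmod2$, a parity count gives $|B|\equiv|A_R|+|B'|\equiv1\pmod2$; as $G_m$ avoids $\e_A$ exactly when $|B| = |A_R\triangle B'|>1$, the set $B$ is an odd non-singleton subset of $R$. In the bottoming-out case ($l=3$) I set $B\defeq R$, the only odd non-singleton subset of a three-element set.

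The inverse then recovers the simplex: given $\tau$ and an odd non-singleton $B\subseteq R\defeq[d]\setminus\tau$ with $l\defeq|R|\ge3$, descend through the coordinate facets prescribed by $\tau$ (their values forced as above) to the face $G\cong H_l$ on $R$ with minimal vertex $\e_A$, $A_R = A\cap R$; if $l\ge4$ put $B'\defeq A_R\triangle B$, whose parity is precisely the one making $G^{\e_{B'}}$ a simplex facet, and whose distance $|A_R\triangle B'| = |B|>1$ certifies that this facet avoids $\e_A$, while if $l=3$ the chain simply stops at $G\cong H_3$. These constructions are mutually inverse, giving the bijection. The hard part is the definition of $B$: the family of simplex facets available at the terminal level depends on $\preceq$ through $\e_A$, and the identifications $H_j^{\e_i}\cong H_{j-1}$ conceal parity-reversing reflections; forming the symmetric difference with $A_R$ both neutralizes these reflections and turns the $\preceq$-dependent family $\{B' : |A_R\triangle B'|>1\}$ into the fixed family of odd non-singleton subsets of $R$, which is what makes the count independent of $\preceq$. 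Summing over the $d!/l!$ partial permutations with $|R|=l$ and the $2^{l-1}-l$ odd non-singleton subsets of $R$ for $l = 3,\dots,d$ then reproduces $t(d)=\sum_{l=3}^d\frac{d!}{l!}(2^{l-1}-l)$ and completes the proof of Theorem~\ref{thm:H_pull}.
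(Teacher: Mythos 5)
Your proof is correct and follows essentially the same route as the paper: decompose the chain of faces \eqref{eqn:pull_simp} into a prefix of coordinate (halfcube) facets, which records the partial permutation $\tau$, followed by a terminal simplex facet, which after recentering at the minimal vertex of the last halfcube face yields the odd non-singleton set $B$. Your explicit bookkeeping via $B = A_R \triangle B'$ (together with the parity count and the separate treatment of the $H_3$ base case) is exactly the content of the paper's terser step ``identify $G_{k-1}$ with $H_{d-k+1}$ \dots\ and $v_{G_{k-1}} = \0$,'' so the two arguments coincide in substance.
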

\begin{proof}
    Since $H_3$ is a simplex and the only admissible pair $(\tau,B)$ is given
    by the empty partial permutation and $B = [3]$, we assume $d \ge 4$.
    For $i=1,\dots,d$ and $\delta \in \ZO$, let
    \[
        F_{i}^\delta \ \defeq \ H_d \cap \{ x_i = \delta \} \ \cong \ H_{d-1}
    \]
    be the halfcube facets of $H_d$. The halfcube $H_d$ for $d \ge 4$ is not a
    simple polytope. However, it follows from Theorem~\ref{thm:facets} that
    the faces of $H_d$ are halfcubes or simplices. If $G \subset H_d$ is a
    face linearly isomorphic to a halfcube of dimension $d-k \ge 4$, then $G$
    is a simple face in the sense that $G$ is precisely the intersection of
    $k$ halfcube facets. Every chain of faces~\eqref{eqn:pull_simp}
    corresponds to some $(i_1,\delta_1),\dots,(i_{k-1},\delta_{k-1}) \in [d]
    \times \ZO$ such that $G_{k-1} = F_{i_1}^{\delta_1} \cap \cdots \cap
    F_{i_{k-1}}^{\delta_{k-1}}$ is isomorphic to $H_{d-k+1}$ and $G_k$ is a
    simplex facet of $G_{k-1}$ not containing $\v_{G_{k-1}}$.  This gives rise
    to a unique partial permutation $\tau = i_1i_2 \dots i_{k-1}$. To see that
    any such partial permutation can arise, we observe that again $V(H_d)
    \subset F_i^0 \cup F_i^1$ for all $i=1,\dots,d$. We can identify $G_{k-1}$
    with $H_{d-k+1}$ embedded in $\{ \x : x_{i_1} = \cdots = x_{i_{k-1}} = 0
    \}$ and $v_{G_{k-1}} = \0$. Now any simplex facet of $H_{d-k+1}$
    corresponds to an odd-cardinality subset $B \subset [d] \setminus \tau$
    with $|B| \neq 1$.
\end{proof}

\section{Monotone paths}\label{sec:mono}

Let $P \subset \R^d$ be a polytope and $\ell : \R^d \to \R$ a linear
function.  An \Def{$\ell$-monotone path} of $P$
is a sequence of vertices $W = \v_1,\v_2,\dots,\v_k$ such that
$[\v_{i},\v_{i+1}]$ is an edge of $P$ for $i=1,\dots,k-1$ and 
\[
    \min \ell(P) \ = \ \ell(\v_1) \ < \ \ell(\v_2) \ < \ \cdots \ < \
    \ell(\v_k) \ = \ \max \ell(P) \, .
\]
More generally, a collection of faces $F_1, F_2, \dots, F_k$  of $P$ is an
\Def{induced subdivision} of the segment $\ell(P)$ if $F_1^{-\ell}$ and
$F_k^{\ell}$ is a face of $P^{-\ell}$ and $P^{\ell}$, respectively, and 
\[
    F_i^{\ell} \ = \ F_{i+1}^{-\ell}
\]
for $i=1,\dots,k-1$. If $\ell$ is \Def{generic}, that is, if $\ell$ is not
constant on edges of $P$, then the minimum/maximum of $\ell$ on every
nonempty face $F$ is attained at a unique vertex. In this case $F^{\pm
\ell}_i$ is a vertex for all $i$ and a induced subdivision is called a
\Def{cellular string}.
An induced subdivision $F'_1,\dots,F'_h$ is a refinement if for every $1 \le
i \le k$, there are $1 \le s < t \le h$ such that $F'_s,\dots,F'_t$ is a
induced subdivision of $\ell(F_i)$. The collection of all induced
subdivisions of $\ell(P)$ is partially ordered by refinement  and is called
the \Def{Baues poset} of $(P,\ell)$.  The minimal elements in the Baues
poset are exactly the $\ell$-monotone paths. Monotone paths are quintessential in
the study of simplex-type algorithms in linear programming but they are also
studied in topology in connection with iterated loop spaces;
see~\cite{BKS,Reiner}. For the linear function $\card(x) = x_1 + \cdots +
x_d$, Corollary~\ref{cor:num_edges} readily yields the $\card$-monotone
paths of $\Delta(d,S)$.

\begin{cor}
    Let $S = \{ s_1 < s_2 < \cdots < s_k \}$ be proper. The $\card$-monotone paths
    correspond to sequences $A_1 \subset A_2 \subset \cdots \subset A_k$ with
    $|A_i| = s_i$ for all $i=1,\dots,k$.
\end{cor}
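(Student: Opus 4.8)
The plan is to reduce everything to the edge characterization of Theorem~\ref{thm:all_edges}. First I would observe that $\card(\e_A) = |A|$, so the value of $\card$ at a vertex $\e_A$ is simply its cardinality, which by assumption lies in $S$. Consequently $\min \card(\Delta(d,S)) = s_1$ and $\max \card(\Delta(d,S)) = s_k$, and a $\card$-monotone path is a sequence of vertices whose cardinalities strictly increase from $s_1$ to $s_k$.

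Next I would determine which edges can occur along such a path. Since $\card$ must strictly increase, no edge on which $\card$ is constant may be used. By Theorem~\ref{thm:all_edges} the edges of $\Delta(d,S)$ fall into two types: the type (i) edges $[\e_A,\e_B]$ with $A \subset B$ joining consecutive levels $s_i < s_{i+1}$, on which $\card$ jumps from $s_i$ to $s_{i+1}$; and the type (ii) edges joining two vertices of the same cardinality $s_i$, on which $\card$ is constant. Hence a $\card$-monotone path can traverse only type (i) edges.

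Because every type (i) edge raises the level by exactly one, from $s_i$ to $s_{i+1}$, a $\card$-monotone path visits precisely one vertex at each level $s_1, s_2, \dots, s_k$, in increasing order and with no level skipped. Writing $\e_{A_i}$ for the vertex at level $s_i$, the defining containment of type (i) edges forces $A_i \subset A_{i+1}$, so the path determines a chain $A_1 \subset A_2 \subset \cdots \subset A_k$ with $|A_i| = s_i$. Conversely, any such chain supplies for each $i$ a type (i) edge $[\e_{A_i}, \e_{A_{i+1}}]$, and concatenating these edges gives a genuine $\card$-monotone path; this establishes the claimed bijection.

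I expect essentially no obstacle here: the only point needing care is that a monotone path cannot leap over an intermediate level of $S$, and this is exactly what the absence of ``long'' edges in Theorem~\ref{thm:all_edges} guarantees, since every edge connects vertices whose cardinalities are either equal or adjacent in $S$. Once that observation is in place, the correspondence with chains is immediate.
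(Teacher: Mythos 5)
Your proof is correct and follows exactly the route the paper intends: the paper gives no explicit proof, stating only that the corollary follows readily from the edge description of $\Delta(d,S)$ (Theorem~\ref{thm:all_edges} via Corollary~\ref{cor:num_edges}), and your argument simply fills in those details. The key observations --- that $\card$ is constant on type (ii) edges so only type (i) edges can appear, and that type (i) edges never skip a level of $S$ --- are precisely the points being invoked.
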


A $\ell$-monotone path $W$ is \Def{coherent} if $W$ is a monotone path with
respect to the \emph{shadow-vertex algorithm};
see~\cite{Borgwardt,KleeKleinschmidt}. That is, if there is
linear function $h_W : \R^d \to \R$ such that under the projection $\pi :
\R^d \to \R^2$ given by $\pi(\x) = (\ell(\x),h_W(\x))$, the path $W$ is mapped
to one of the two paths in the boundary of the polygon $\pi(P)$.
Figure~\ref{fig:incoh} shows that in general coherent paths constitute
a proper subset of all $\ell$-monotone paths and it is interesting to
determine for which pairs $(P,\ell)$ all $\ell$-monotone paths are coherent;
see, for example, the recent paper~\cite{EJLC}. The $S$-hypersimplices
with the linear function $\card(x)$ are examples of this.

\begin{figure}[h]
	\begin{tikzpicture}[scale=.75]
    \draw [line cap=round, line width=1.5pt] (5.5,3)-- (7,5);
	
	\draw[->] (0,0) -- (8,0) node[right] {\Large $\ell$};
	
	\draw[shift={(1,0)}] (0pt,2pt) -- (0pt,-2pt) node[below] {\large $t_0$};
	\draw[shift={(2.5,0)}] (0pt,2pt) -- (0pt,-2pt) node[below] {\large $t_1$};
	\draw[shift={(5.5,0)}] (0pt,2pt) -- (0pt,-2pt) node[below] {\large $t_2$};
	\draw[shift={(7,0)}] (0pt,2pt) -- (0pt,-2pt) node[below] {\large $t_3$};

	\draw [line cap=round, line width=1.5pt, color=red] (1,5)-- (2.5,3.03) -- (5.53,3.03) -- (7,1.06);
	\draw [line cap=round, line width=1.5pt, color=green] (1,1)-- (2.5,2.97) -- (5.50,2.97) -- (6.97,1);
	\draw [line cap=round, line width=1.5pt, color=blue] (1,5)-- (7,5);

	\draw [line cap=round, line width=1.5pt] (1,1)-- (7,1) -- (7,5) (1,1)-- (1,5);
	
	\draw [line cap=round, line width=1.5pt] (10,2.5)-- (14,2.5) -- (12.8,4) -- (11.2,4) -- (10,2.5);
	
    \begin{scriptsize}
      \draw[color=black] (12,3.2) node {\large $\sum_\ell(P)$};
      \draw[color=black] (0.2,3) node {\Large $P$};
      \draw[color=blue] (3,5.5) node {\large $W_1$};
      \draw[color=green] (2.2,1.8) node {\large $W_2$};
      
      \draw [fill=blue] (10,2.5) circle (3.5pt);
      \draw[color=blue] (9.7,2.0) node {\large $W_1$};
      \draw [fill=green] (12.8,4) circle (3.5pt);
      \draw[color=green] (13.3,4.4) node {\large $W_2$};
	\end{scriptsize}
	
	\end{tikzpicture}
    \caption{Left: Top view of triangular prism $P$ and linear function
    $\ell$. Three $\ell$-monotone paths (in red, green, and blue) but the red
    path is not coherent. Right: Monotone path polytope $\Sigma_\ell(P)$.}
    \label{fig:incoh}
\end{figure}
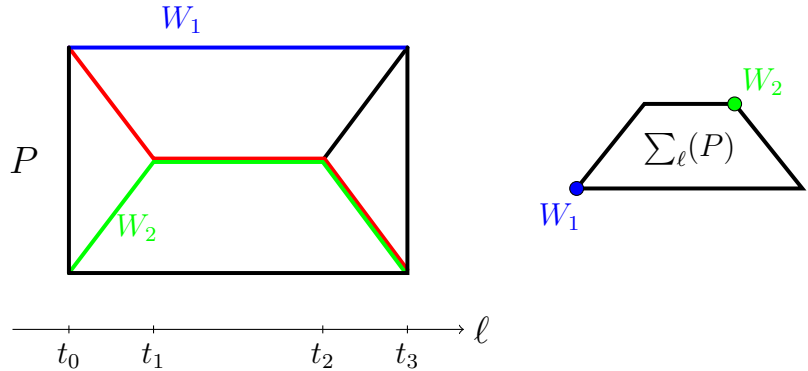

\begin{prop}
    Let $S \subseteq [0,d]$ be proper. Then all $\card$-monotone path of
    $\Delta(d,S)$ are coherent.
\end{prop}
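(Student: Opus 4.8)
The plan is to realize each $\card$-monotone path $W$ as a boundary path of the projection $\pi = (\card, h_W)$ by writing down an explicit $h_W$. By the preceding corollary, every $\card$-monotone path has the form $W = \e_{A_1}, \e_{A_2}, \dots, \e_{A_k}$ for a chain $A_1 \subset A_2 \subset \cdots \subset A_k$ with $|A_i| = s_i$. To encode this chain into a linear functional, I would weight each index by the layer in which it enters the chain: set $A_0 \defeq \emptyset$, $A_{k+1} \defeq [d]$, and define $w_j \defeq k - i$ whenever $j \in A_{i+1} \setminus A_i$, so that the indices of $A_1$ carry the largest weight, those of $A_2 \setminus A_1$ the next, and so on down to $[d] \setminus A_k$. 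Let $h_W(\x) \defeq \inner{w, \x}$.

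The decisive property of this choice is that on each level $\Delta(d,s_i) = \Delta(d,S) \cap H(s_i)$ the functional $h_W$ is maximized uniquely at $\e_{A_i}$. Indeed, maximizing $\inner{w, \cdot}$ over all $\e_C$ with $|C| = s_i$ amounts to picking the $s_i$ indices of largest weight; by construction the indices of weight $> k-i$ are exactly those of $A_1 \cup (A_2 \setminus A_1) \cup \cdots \cup (A_i \setminus A_{i-1}) = A_i$, which number $s_i$, and the choice is unique because the next layer carries the strictly smaller weight $k-i$.

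The remaining step is to check that this level-wise maximality forces $\pi(W)$ to be the upper boundary path of the polygon $\pi(\Delta(d,S))$, whose upper arc is the graph of the (concave) function $g(\lambda) \defeq \max\{ h_W(\x) : \x \in \Delta(d,S),\, \card(\x) = \lambda\}$ for $\lambda \in [s_1, s_k]$. For $\lambda \in (s_i, s_{i+1})$, Theorem~\ref{thm:all_edges} guarantees that every vertex of the slice $\Delta(d,S) \cap H(\lambda)$ lies on an edge $[\e_C, \e_D]$ with $C \subset D$ and $|C| = s_i < s_{i+1} = |D|$. Writing the level-$\lambda$ point of such an edge as the convex combination of $\e_C$ and $\e_D$ with the positive coefficients $\tfrac{s_{i+1}-\lambda}{s_{i+1}-s_i}$ and $\tfrac{\lambda - s_i}{s_{i+1}-s_i}$, the bounds $h_W(\e_C) \le h_W(\e_{A_i})$ and $h_W(\e_D) \le h_W(\e_{A_{i+1}})$ combine to show that $h_W$ is maximized over the slice on the segment $[\e_{A_i}, \e_{A_{i+1}}]$. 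Thus $g$ is the piecewise-linear interpolation of the points $(s_i, h_W(\e_{A_i})) = \pi(\e_{A_i})$, so its graph is exactly $\pi(W)$, which proves coherence.

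Writing down $h_W$ is immediate, and the unique-maximizer computation is routine; I expect the only point demanding care to be the last step, namely confirming that unique maximality at each \emph{level} really pins down the whole boundary path rather than merely its vertices. The interpolation inequality above closes that gap, and it hinges essentially on the fact (Theorem~\ref{thm:all_edges}) that $\Delta(d,S)$ has no edge skipping a cardinality level, so that the slice at an intermediate height $\lambda$ is governed entirely by the two adjacent levels $s_i$ and $s_{i+1}$.
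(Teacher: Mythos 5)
Your proof is correct and is essentially the paper's: the weight vector $w$ you define is exactly $\1_{A_1}+\cdots+\1_{A_k}$, the certifying functional the paper uses. The paper dismisses the verification as ``easy to see''; your level-wise unique-maximality computation and the interpolation argument across intermediate slices (resting on Theorem~\ref{thm:all_edges}) supply precisely the details it omits.
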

\begin{proof}
    Let $A_1 \subset A_2 \subset \cdots \subset A_k$ be a $\card$-monotone path.
    For the linear function 
    \[
        h(\x) \ \defeq \ \inner{ 
        \1_{A_1} + 
        \cdots +
        \1_{A_k}, \x}
    \]
    it is easy to see that $h(\1_B)$ with $B \in \binom{[d]}{S}$ is maximal
    if and only if $B \in \{A_1,\dots,A_k\}$.
\end{proof}

The \Def{monotone path polytope} $\Sigma_\ell(P)$ is a convex polytope of
dimension $\dim P - 1$ whose face lattice is isomorphic to the poset of
coherent subdivisions. The construction is a special case of fiber polytopes
of Billera and Sturmfels~\cite{BS}. Let $\ell(P) = [a,b] \subset \R$.  A
\Def{section} of $(P,\ell)$ is a continuous function $\gamma : [a,b] \to P$ such
that $\ell(\gamma(t)) = t$ for all $a \le t \le b$. Following~\cite{BS}, the
monotone path polytope is defined as
\[
    \Sigma_\ell(P) \ = \ \conv \left\{ \frac{1}{b-a}\int_P \gamma \, d\x :
    \gamma \text{ section} \right\} \, .
\]

We now determine the monotone path polytopes of $\Delta(d,S)$ with respect to
the natural linear function $\card(x) = x_1 + \cdots + x_d$. Let us first
observe that for $S \subset [d-1]$ the $\card$-monotone paths of $\Delta(d,S)$
and $\Delta(d,S \cup \{0,d\})$ are in bijection. Clearly every
$\card$-monotone path of $\Delta(d,S \cup \{0,d\})$ restricts to a
$\card$-monotone path of $\Delta(d,S)$. Conversely, if $A_1 \subset \cdots
\subset A_k$ corresponds to a $\card$-monotone path, then $\emptyset \eqdef A_0
\subset A_1 \subset \cdots \subset A_k \subset A_{k+1} = [d]$ is the unique
extension to a $\card$-monotone path of $\Delta(d,S \cup \{0,d\})$.

\begin{thm}\
    Let $S = \{ 0 = s_0 \le s_1 < s_2 < \cdots < s_{k-1} < s_{k} = d \}$ be
    proper. Then
    \[
        \tfrac{1}{2} \1 + d \cdot \Sigma_{\card}(\Delta(d,S)) \ = \
        \Pi(k^{s_1-s_0},(k-1)^{s_2-s_1},\dots,1^{s_{k}-s_{k-1}}) \, .
    \]
\end{thm}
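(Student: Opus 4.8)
The plan is to compute the vertices of $\Sigma_{\card}(\Delta(d,S))$ directly and then to recognize the resulting point set as the vertex set of a permutahedron. Because the preceding proposition shows that every $\card$-monotone path of $\Delta(d,S)$ is coherent, the vertices of $\Sigma_{\card}(\Delta(d,S))$ are exactly the barycenters of the piecewise-linear sections running along these paths. By the corollary above, a monotone path is a chain $\emptyset = A_0 \subset A_1 \subset \cdots \subset A_k = [d]$ with $|A_i| = s_i$, and its section $\gamma$ traverses the edge $[\e_{A_{i-1}}, \e_{A_i}]$ linearly as $\card$ runs from $s_{i-1}$ to $s_i$. Since $\card(\Delta(d,S)) = [0,d]$, the associated vertex is
\[
    v \ = \ \frac{1}{d}\int_0^{d} \gamma(t)\,dt \ = \ \frac{1}{d}\sum_{i=1}^{k}(s_i - s_{i-1})\,\frac{\e_{A_{i-1}} + \e_{A_i}}{2}\,,
\]
because the integral of each linear piece over $[s_{i-1},s_i]$ equals its length times the midpoint of its endpoints.

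The key step is to read off the coordinates of $v$. For $m \in [d]$ let $j$ be the step at which $m$ enters the chain, i.e.\ $m \in A_i$ exactly when $i \ge j$. Then only the summands with $i \ge j$ (through $\e_{A_i}$) and with $i \ge j+1$ (through $\e_{A_{i-1}}$) contribute, and the telescoping sums $\sum_{i\ge j}(s_i-s_{i-1}) = d - s_{j-1}$ and $\sum_{i\ge j+1}(s_i-s_{i-1}) = d - s_j$ give
\[
    v_m \ = \ \frac1d\cdot\frac{(d-s_{j-1})+(d-s_j)}{2} \ = \ 1 - \frac{s_{j-1}+s_j}{2d}\,,
\]
a value depending only on $j$. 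Thus $v$ is a permutation of the fixed decreasing vector whose $j$-th block, of size $s_j - s_{j-1}$, carries this common entry. To finish I would argue that every permutation of this vector occurs: assigning to each $m \in [d]$ its entry step $j$ is the same as choosing an ordered set partition of $[d]$ into blocks of sizes $s_1-s_0,\dots,s_k-s_{k-1}$, and every such partition is realized by a (coherent) monotone path. Hence $\Sigma_{\card}(\Delta(d,S))$ is exactly the permutahedron of that vector, and applying the homothety $\x \mapsto \tfrac12\1 + d\,\x$ rescales the $j$-th block entry to $\tfrac12 + d - \tfrac{s_{j-1}+s_j}{2}$. This already identifies $\tfrac12\1 + d\cdot\Sigma_{\card}(\Delta(d,S))$ as a multipermutahedron with block sizes $s_1-s_0,\dots,s_k-s_{k-1}$, which is the combinatorial content of the theorem.

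The step I expect to require the most care is the final matching of scalars with $\Pi(k^{s_1-s_0},\dots,1^{s_k-s_{k-1}})$. The multiplicities $s_j - s_{j-1}$ agree on the nose, so the delicate point is the block values: one must reconcile the decreasing vector with $j$-th entry $\tfrac12 + d - \tfrac{s_{j-1}+s_j}{2}$ with the decreasing vector $(k,k-1,\dots,1)$ carrying the same multiplicities. For the cube $S=[0,d]$ one has $k=d$ and the block values collapse to $d,d-1,\dots,1$, recovering \cite[Example~5.4]{BS}; pinning down the normalization for general proper $S$ is the heart of the argument and the place I would scrutinize most closely. A cleaner route that sidesteps the explicit integral would be to combine the slab decomposition~\eqref{eqn:decomp} with Proposition~\ref{prop:perm_Mink}: if $\Sigma_{\card}$ distributes over the union into a Minkowski sum of the permutahedral contributions of the Cayley polytopes $\Delta(d,s_{i-1},s_i)$, then Proposition~\ref{prop:perm_Mink} assembles these into a single permutahedron and reduces the theorem to the two-level case $|S|=2$.
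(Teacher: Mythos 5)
Your route---computing the vertices of $\Sigma_{\card}(\Delta(d,S))$ directly as integrals of the piecewise-linear sections along the (all coherent) monotone paths---is genuinely different from the paper's, which invokes Theorem~1.5 of \cite{BS} to write $d\cdot\Sigma_{\card}(\Delta(d,S))$ as a Minkowski sum of the slices $\Delta(d,s_i)$ and then applies Proposition~\ref{prop:perm_Mink}. Your integral computation is correct: the vertex attached to a flag $\emptyset=A_0\subset\cdots\subset A_k=[d]$ has $m$-th coordinate $1-\tfrac{s_{j-1}+s_j}{2d}$ when $m\in A_j\setminus A_{j-1}$, and since every ordered set partition with block sizes $s_j-s_{j-1}$ arises from a coherent path, $\Sigma_{\card}(\Delta(d,S))$ is indeed the permutahedron of the decreasing vector with these block values. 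The gap is exactly the step you flagged and postponed: the rescaled block values $\tfrac12+d-\tfrac{s_{j-1}+s_j}{2}$ do \emph{not} reconcile with $k-j+1$ for general proper $S$, so the proof cannot be closed as written. Concretely, for $d=4$ and $S=\{0,1,4\}$ your formula gives the vertex $\tfrac18(7,3,3,3)$, hence $\tfrac12\1+4\cdot\Sigma_{\card}(\Delta(4,S))=\Pi(4,2,2,2)$, while the stated right-hand side is $\Pi(2^1,1^3)=\Pi(2,1,1,1)$; these differ already in their coordinate sums ($10$ versus $5$). For $S=\{0,1,2,4\}$ your vector is $(4,3,\tfrac32,\tfrac32)$ against the claimed $(3,2,1,1)$, and these are not even homothetic. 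The two sides agree on the nose only when $S=[0,d]$; in general they are merely normally equivalent, i.e.\ the same multipermutahedron combinatorially.

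The discrepancy is not on your side. The paper's proof uses the identity $(b-a)\Sigma_\ell(P)=\tfrac12 P_0+\sum_{i=1}^{k-1}P_i+\tfrac12 P_k$, but the Minkowski-integral formula carries the chamber lengths as weights, $(b-a)\Sigma_\ell(P)=\sum_{i=1}^{k}\tfrac{t_i-t_{i-1}}{2}\bigl(P_{i-1}+P_i\bigr)$, and the unweighted version is valid only when the critical values $t_i$ are equally spaced. Re-running the paper's argument with the correct weights and Proposition~\ref{prop:perm_Mink} reproduces exactly your answer: $\tfrac12\1+d\cdot\Sigma_{\card}(\Delta(d,S))=\Pi(\p)$ with $p_m=\tfrac12+d-\tfrac{s_{j-1}+s_j}{2}$ on the $j$-th block. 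Your proposed ``cleaner route'' of distributing $\Sigma_{\card}$ over the decomposition~\eqref{eqn:decomp} is essentially the paper's strategy, and the weights are precisely what that route must keep track of. In summary: your computation is sound and in fact more reliable than the paper's here, but as a proof of the theorem \emph{as stated} it has a gap that cannot be closed, because the claimed identity fails for general $S$; either the block values in the statement must be corrected as above, or the conclusion weakened to normal equivalence.
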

\begin{proof}
    Let $P \subset \R^d$ be a polytope with vertex set $V$ and let $\ell$ be a
    linear function. Let $\ell(V) = \{ a = t_0 < t_1 < \cdots < t_k = b \}$.
    We write $P_i \defeq P \cap \ell^{-1}(t_i)$ for $0 \le i \le k$. Theorem~1.5
    of~\cite{BS} together with the fact that 
    \[
        P \cap \ell^{-1}\left(\frac{t_i + t_{i+1}}{2}\right) \ = \
        \frac{1}{2}(P_i + P_{i+1})
    \]
    for $0 \le i < m$ yields that
    \[
        (b-a) \Sigma_\ell(P) \ = \ \tfrac{1}{2} P_0 + \sum_{i=1}^{k-1} P_i +
        \tfrac{1}{2} P_k \, .
    \]
    If $P = \Delta(d,S)$ and $\ell(\x) = \card(\x)$, then $P_i = \Delta(d,s_i)$
    for $0 \le i \le k$. In particular, $P_0 = \{\0\}$ and $P_k = \{\1\}$.
    Therefore
    \[
        \tfrac{1}{2} \1 + d \cdot \Sigma_{\card}(\Delta(d,S)) \ = \
        \sum_{i=0}^k \Delta(d,s_i) \, .
    \]
    Since $\Delta(d,s_i) = \Pi(1^{s_i},0^{d-s_i})$ we conclude from
    Proposition~\ref{prop:perm_Mink} that the above sum is the permutahedron
    $\Pi(\p)$ for
    \[
        \p = (1^{s_0},0^{d-s_0}) + \cdots + (1^{s_k},0^{d-s_k}) \, .
    \]
    This finishes the argument.
\end{proof}

\section{Further questions}\label{sec:misc}

\subsection*{Volumes and Gr\"obner bases}
Laplace and later Stanley~\cite{StanleyHyp} showed that the volume of
$\Delta(d,i,i+1)$ is $\frac{A(d,i)}{d!}$ where $A(d,i)$ counts the number of
permutations $\sigma$ of $[d]$ with $i$ \Def{descents}, that is, the number of
$1 \le i < d$ such that $\sigma(i) > \sigma(i+1)$; see
also~\cite{LP,SanyalStump}. This implies that $d! \, \vol
\Delta(d,[k,l])$ is the number of permutations of $[d]$ with descent number in
$[k,l] = \{k,k+1,\dots,l\}$ for any $k < l$. It would be very interesting to
know if $\vol \Delta(d,S)$ has a combinatorial interpretation for all $S$. In
light of~\eqref{eqn:decomp} it would be sufficient to determine $\vol
\Delta(d,k,l)$ for $l - k > 1$.
   
For $0 \le k < d$, the hypersimplices $\Delta(d,k,k+1) \cong \Delta(d,k+1)$
are \emph{alcoved polytopes} in the sense of
Lam--Postnikov~\cite{LP} and hence come with a canonical square-free
and unimodular triangulation. This is reflected by the fact that the
associated toric ideals have quadratic and square-free Gr\"obner bases with
respect to the reverse-lexicographic term order.

For general $k < l$, the polytopes $\Delta(d,k,l)$ are not alcoved anymore. It
would be interesting if $\Delta(d,k,l)$ has a unimodular triangulation or
square-free Gr\"obner basis.

\subsection{Extension complexity}
\newcommand\ext{\operatorname{ext}}
An \Def{extension} of a polytope $P$ is a polytope $Q$ together with a
surjective linear projection $Q \to P$. The \Def{extension complexity}
$\ext(P)$ of $P$ is the minimal number of facets of an extension of $P$. This
is a parameter that is of interest in combinatorial
optimization~\cite{Kaibel11a}. It was shown in~\cite{ExtHypSimp} that
$\ext(\Delta(d,k,k+1)) = 2d$ for $1 \le k \le d-2$.

A realization of the join of two polytopes $P, Q \subset \R^d$ is given by $P
\ast Q = \conv( (P \times \0 \times 0) \cup (\0 \times Q \times 1) )$. If $P$
and $Q$ has $m$ and $n$ facets, respectively, then $P*Q$ has $m+n$ facets.
Balas' union bound~\cite{Balas} is the observation that $P \ast Q \to P \cup
Q$ and hence $\ext(P \cup Q) \le \ext(P) + \ext(Q)$. Iterating the join over
the pieces of the decomposition~\ref{eqn:decomp} shows the following.

\begin{prop}\label{prop:ext}
    If $S \subseteq [0,d]$ is proper, then
    \[
        \ext(\Delta(d,S)) \ \le \ 2d \, (|S|-1) \, .
    \]
\end{prop}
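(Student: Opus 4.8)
The plan is to use the decomposition~\eqref{eqn:decomp} together with Balas' union bound, iterating the join construction over the $k-1$ Cayley pieces of $\Delta(d,S)$. Recall that if $S = \{s_1 < s_2 < \cdots < s_k\}$, then equation~\eqref{eqn:decomp} writes $\Delta(d,S)$ as a union of $k-1$ polytopes $\Delta(d,s_i,s_{i+1})$, and since $|S| = k$ we have $k-1 = |S|-1$ pieces. Balas' bound gives $\ext(P \cup Q) \le \ext(P) + \ext(Q)$, so by induction on the number of pieces we obtain
\[
    \ext(\Delta(d,S)) \ \le \ \sum_{i=1}^{k-1} \ext(\Delta(d,s_i,s_{i+1})) \, .
\]
Thus the whole statement reduces to bounding the extension complexity of a single Cayley piece $\Delta(d,s_i,s_{i+1})$ by $2d$.

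First I would establish the bound $\ext(\Delta(d,k,l)) \le 2d$ for each $0 \le k < l \le d$. This is exactly the content that makes the constant $2d$ appear: summing $2d$ over the $|S|-1$ pieces yields the claimed $2d(|S|-1)$. For the single-piece bound I would invoke the result cited from~\cite{ExtHypSimp}, which gives $\ext(\Delta(d,k,k+1)) = 2d$ for the consecutive case. For the non-consecutive case $l - k > 1$, I would observe that $\Delta(d,k,l)$ itself decomposes via~\eqref{eqn:decomp}, but more directly one should note that $\Delta(d,k,l)$ has a small extension: since $\Delta(d,k,l) = \conv(\Delta(d,k) \cup \Delta(d,l))$ sits inside the cube $\Cube_d$ and is cut out by the $2d$ cube facets together with the two parallel constraints $\card(\x) \ge k$ and $\card(\x) \le l$, one expects an extension with a number of facets on the order of $2d$. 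Alternatively, the cleanest route is to cite the known fact that each $\Delta(d,k,l)$ admits an extension with at most $2d$ facets (for instance the standard reflection/box extension of a hypersimplex), and then plug this into the union bound above.

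The main obstacle I anticipate is pinning down precisely the single-piece bound $\ext(\Delta(d,k,l)) \le 2d$ in the non-consecutive case $l-k > 1$, since the cited result~\cite{ExtHypSimp} is stated only for the consecutive hypersimplices $\Delta(d,k,k+1)$. One has to be careful not to double-count the shared facet $\Delta(d,s_i)$ when the union bound is applied repeatedly; fortunately Balas' bound $\ext(P \cup Q) \le \ext(P) + \ext(Q)$ holds regardless of how $P$ and $Q$ meet, so the overlap $\Delta(d,s_i,s_{i+1}) \cap \Delta(d,s_{i+1},s_{i+2}) = \Delta(d,s_{i+1})$ causes no difficulty in the bound itself. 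Therefore the induction on $|S|$ goes through cleanly once the base estimate $2d$ per Cayley piece is in hand, and the stated inequality $\ext(\Delta(d,S)) \le 2d(|S|-1)$ follows immediately by summation.
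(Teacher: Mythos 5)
Your overall strategy --- decompose via \eqref{eqn:decomp}, apply Balas' union bound to the $|S|-1$ Cayley pieces, and reduce everything to a bound of $2d$ per piece --- is exactly the route the paper takes, and your remark that the overlaps $\Delta(d,s_i,s_{i+1})\cap\Delta(d,s_{i+1},s_{i+2})$ are harmless for Balas' bound is correct. The problem is the step you yourself flag as the main obstacle: neither of your proposed repairs is valid. The claim that $\Delta(d,k,l)$ ``is cut out by the $2d$ cube facets together with the two parallel constraints $\card(\x)\ge k$ and $\card(\x)\le l$'' is false for $l-k>1$: that inequality description defines $\Cube_d\cap\{k\le\card(\x)\le l\}=\Delta(d,[k,l])$, which strictly contains the Cayley polytope $\Delta(d,k,l)=\conv(\Delta(d,k)\cup\Delta(d,l))$ (for instance, any $\e_A$ with $k<|A|<l$ lies in the former but not in the latter). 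Indeed, Proposition~\ref{prop:facets} of this very paper shows that $\Delta(d,k,l)$ has the additional facets $\Delta(d,k,l)^{(l-h)\e_I-(h-k)\e_{I^c}}$ for every $I$ with $k<|I|=h<l$, so it has exponentially many facets in general; a short facet description is precisely what is \emph{not} available. Your fallback of ``citing the known fact'' is also not available: \cite{ExtHypSimp} gives $\ext(\Delta(d,k,k+1))=2d$ only in the consecutive case, where $\Delta(d,k,k+1)\cong\Delta(d+1,k+1)$ is an honest hypersimplex with $2d$ facets of its own.

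What the join/Balas machinery yields without further input is $\ext(\Delta(d,k,l))\le\ext(\Delta(d,k))+\ext(\Delta(d,l))\le 4d$ per piece, hence $4d(|S|-1)$ overall, or alternatively $\ext(\Delta(d,S))\le\sum_{s\in S}\ext(\Delta(d,s))\le 2d\,|S|$ by applying Balas directly to the layer decomposition $\Delta(d,S)=\conv\bigl(\bigcup_{s\in S}\Delta(d,s)\bigr)$; both fall short of the stated $2d(|S|-1)$ once $S$ contains non-consecutive elements. To close the gap you must actually exhibit (or correctly cite) an extension of $\Delta(d,k,l)$ with at most $2d$ facets for arbitrary $k<l$. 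The paper's own proof is a single sentence that likewise leans on the decomposition together with the cited consecutive-case result, so this single-piece bound is the one substantive thing your write-up needs to supply rather than paper over with an incorrect inequality description.
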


This is a nontrivial bound as the number of facets of $\Delta(d,S)$ is at
least $2 + 2d + \sum_{r \not\in S} \binom{d}{r}$. To illustrate, note that the
number of facets of the halfcube $H_d$ for $d \ge 5$ is $2d + 2^{d-1}$ whereas
the bounded afforded by Proposition~\ref{prop:ext} is $\le d^2$. Carr and
Konjevod~\cite{CarrKonjevod} gave an extension of $H_d$ of size linear in $d$.
It would be interesting to know lower bounds on the extension complexity of
$\Delta(d,S)$, maybe using the approach via rectangular covering;
c.f.~\cite{ExtHypSimp}.

\bibliographystyle{siam} \bibliography{bibliography.bib}

\end{document}